\title{Properties of Hesse derivatives of cubic curves}
\author[1]{Sayan Dutta}
\author[2]{Lorenz Halbeisen}
\author[2]{Norbert Hungerb\"uhler}
\affil[1]{Department of Mathematics and Statistics, IISER Kolkata, Mohanpur, West Bengal 741246, India}
\affil[2]{Department of Mathematics, ETH Zentrum, R\"amistrasse~101, 8092 Z\"urich, Switzerland}
\date{}
\pgfplotsset{compat=1.15}
\newtheorem{nummer}{ }
\newtheorem{thm}[nummer]{\bf Theorem}
\newtheorem{prp}[nummer]{\bf Proposition}
\newtheorem{lem}[nummer]{\bf Lemma}
\newtheorem{cor}[nummer]{\bf Corollary}
\newcommand{\ie} {i.e.}
\newcommand{\eg} {e.g.}
\renewcommand{\phi}{\varphi}
\renewcommand{\theta}{\vartheta}
\newcommand{\G}[1]{\Gamma_{\hspace*{-2pt}{#1}}}
\newcommand{\N}{\mathds{N}}
\newcommand{\R}{\mathds{R}}
\newcommand{\C}{\mathds{C}}
\newcommand{\rpp}{\mathds{R}\mathds{P}^2}
\newcommand{\h}{\raisebox{-.3pt}{\includegraphics[scale=.88]{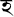}}}
\newcommand{\footnoteh}{\raisebox{-.9pt}{\includegraphics[scale=.75]{haw.eps}}}
\newcommand{\abstracth}{\raisebox{-.1pt}{\includegraphics[scale=.8]{haw.eps}}}
\newcommand{\blueh}{\raisebox{-.1pt}{\includegraphics[scale=1.03]{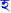}}}
\newcommand{\sectionh}{\raisebox{-.1pt}{\includegraphics[scale=1.26]{haw.eps}}}
\newcommand{\ch}{\raisebox{-0pt}{\includegraphics[scale=1]{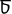}}}
\newcommand{\footnotech}{\raisebox{-.0pt}{\includegraphics[scale=.75]{chaw.eps}}}
\newcommand{\blcb}{\big{\{}}
\newcommand{\brcb}{\big{\}}}
\newcommand {\mult}{\mathbin{*}}
\def\opargproof[#1]{\par\noindent {\bf #1 }}
\begin{document}
\begin{center}
{\LARGE\bf Properties of Hesse derivatives of cubic curves}

\bigskip
{\small Sayan Dutta}\\[1.2ex] 
{\scriptsize Department of Mathematics and Statistics, 
IISER Kolkata, Mohanpur, West Bengal 741246, India\\ sd19ms148@iiserkol.ac.in}
\\[1.8ex]

{\small Lorenz Halbeisen}\\[1.2ex] 
{\scriptsize Department of Mathematics, ETH Zentrum,
R\"amistrasse\;101, 8092 Z\"urich, Switzerland\\ lorenz.halbeisen@math.ethz.ch}\\[1.8ex]

{\small Norbert Hungerb\"uhler}\\[1.2ex] 
{\scriptsize Department of Mathematics, ETH Zentrum,
R\"amistrasse\;101, 8092 Z\"urich, Switzerland\\ norbert.hungerbuehler@math.ethz.ch}\\[1.8ex]

\end{center}

\hspace{5ex}{\small{\it key-words\/}: Elliptic curves, Hessian curves, 
geometry of cubic curves, halving formulae, discrete dynamical system}

\hspace{5ex}{\small{\it 2020 Mathematics Subject 
Classification\/}: {\bf 11G05}\,\ 37N99}

\begin{abstract}
The Hesse curve or Hesse derivative $\abstracth \G{f}$ of a cubic curve $\G{f}$ given by a homogeneous polynomial $f$ is the set of  points $P$ such that 
$\det \left (H_f (P)\right )=0$, where $H_f (P)$ is the Hesse matrix of $f$ evaluated at $P$.  Also $\abstracth \G{f}$ is again a cubic curve.
We show that for a point $P\in \abstracth \G{f}$, all the contact points of tangents from $P$ to the curves $\G{f}$ and $\abstracth \G{f}$ 
are intersection points of two straight lines $\ell_1^P$ and $\ell_2^P$ 
(meeting on $\abstracth \G{f}$) with $\G{f}$ and $\abstracth \G{f}$, 
where the product of $\ell_1^P$ and $\ell_2^P$ is the polar conic of $\G{f}$ at $P$. 
The operator $\abstracth$ defines an iterative discrete dynamical system on the set of the cubic curves. We identify the two fixed 
points of this system, investigate orbits that end in the fixed points, and discuss the closed orbits of the dynamical system.
%
\end{abstract}

\section{Introduction}
We will work with cubic curves in the real projective plane $\rpp$. Points  $X=(x_1,x_2,x_3)^T\in\R^3 \setminus \{0\}$ will be denoted by capital letters, the components with small letters, and the equivalence class by $[X]:=\{\lambda X\mid\lambda\in\R\setminus\{0\}\}$.
However, since we mostly work with representatives, we often omit the square brackets in the notation.

Let $f$ be a homogeneous polynomial in the variables $x_1,x_2,x_3$ of degree $3$. Then $f$ defines the projective cubic curve 
$$\G f:= \blcb [X] \in \rpp\mid f(X)=0\brcb\,.$$
The Hesse matrix of $f$ 
is the symmetric $3\times 3$ matrix $\displaystyle{H_f=\Bigl( \frac{\partial^2f}{\partial x_i\partial x_j}\Bigr)}$. 

Observe that $\det(H_f)$ is again a homogeneous cubic polynomial. 
Therefore, we can define the \textit{Hesse derivative} of $\G f$,
denoted $\h\G f$\footnote{In order to denote the Hesse derivative 
of a cubic curve, we
introduce the Bengali letter \footnoteh{}\hspace*{-1pt} (pronounced ``Haw"). 
As a fact we would like to mention that ``Hesse" in Bengali means ``to laugh"!},  
as the cubic curve 
$$\h\G f:=\G{\det(H_f)}= \blcb [X] \in \rpp\mid \det(H_f(X))=0\brcb\,.$$
The polar conic of $\G f$ with respect to the pole $P$ is given by the equation
\begin{equation}
    \mathcal{C}_f (P) :\ \langle X,H_f(P)X\rangle=0\label{eq1}
\end{equation}
or equivalently 
\begin{equation}
    \mathcal{C}_f (P) :\ \langle\nabla f(X), P\rangle = 0.\label{eq2}
\end{equation}
The equivalence of (\ref{eq1}) and (\ref{eq2}) is shown in~\cite{hhs}.
It is clear from~(\ref{eq2}) that the contact points of the tangents from $P$ to $\G f$ are precisely the intersection points of $\mathcal{C}_f(P)$ with $\G f$ (see Figure~\ref{fig1}).

If there is no danger of confusion, we will omit the index and briefly write $\Gamma$ instead of $\G f$. Moreover, we will use the notation $H_\Gamma$ instead of $H_f$, and $\mathcal{C}_\Gamma (P)$ instead of $\mathcal{C}_f (P)$ if the polynomial $f$ is determined by the context or if a general but unique polynomial is meant. 
We would like to mention that the Hesse derivative $\h \Gamma$ 
is also known as \textit{Hessian curve}, denoted $\operatorname{Hess}(\Gamma)$ 
(see, {\eg}, \cite[\S\,4.12,\,p.\,111]{holme}.
However, we prefer the notation $\h\Gamma$ because we want to interpret $\h$ as an operator whose iterations we want to study.
Whenever convenient, we will use $x,y,z$ instead of $x_1,x_2,x_3$ for the coordinates. The figures below of the various projective curves show images of the  curves in the affine plane $x_3=1$ embedded in $\rpp$.

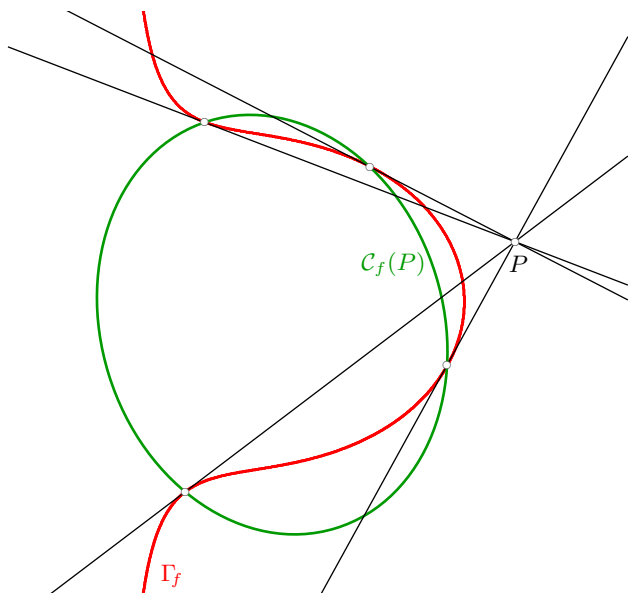
\begin{figure}[H]
  \begin{center}
\newrgbcolor{darkgreen}{0 .6 0}
\psset{xunit=1.4cm,yunit=1.4cm,algebraic=true,dimen=middle,dotstyle=o,dotsize=5pt 0,linewidth=1.6pt,arrowsize=3pt 2,arrowinset=0.25}
\begin{pspicture*}(-1.0857414386694395,-2.7510220414825723)(4.738163172865769,2.754636416416814)
\psplotImp[linewidth=1pt,linecolor=red,stepFactor=0.2](-3.0,-4.0)(5.0,3.0){-4.0+3.0*x^1*y^2-6.0*x^2+2.0*x^3}
\rput{-72.914358}(1.39649387,-0.2162127){\psellipse[linewidth=1pt,linecolor=darkgreen](0,0)(2.018572169301821,1.6064737553715338)}
\psplot[linewidth=0.5pt]{-1.0857414386694395}{4.738163172865769}{(-5.816054349046266--1.130067005618543*x)/-2.916346595806849}
\psplot[linewidth=0.5pt]{-1.0857414386694395}{4.738163172865769}{(--6.946482812425451-2.3687213641519627*x)/-3.098496891409968}
\psplot[linewidth=0.5pt]{-1.0857414386694395}{4.738163172865769}{(-3.3904663678591893--0.7104980909477601*x)/-1.3659904646693124}
\psplot[linewidth=0.5pt]{-1.0857414386694395}{4.738163172865769}{(--3.9353579078398297-1.1689837959452332*x)/-0.638269122223921}
\begin{small}
\psdots[dotsize=3pt 0](3.6773426607464237,0.569346161450969)
\rput[bl](3.62,0.28){\textcolor{black}{$P$}}
\rput[bl](.35,-2.7){\textcolor{red}{$\G f$}}
\rput[bl](2.234626764264113,0.24049180269397283){\textcolor{darkgreen}{$\mathcal{C}_f(P)$}}
\psdots[dotsize=3pt 0](0.5788457693364555,-1.7993752027009935)
\psdots[dotsize=3pt 0](0.7609960649395747,1.6994131670695118)
\psdots[dotsize=3pt 0](2.3113521960771113,1.279844252398729)
\psdots[dotsize=3pt 0](3.0390735385225027,-0.5996376344942643)
\end{small}
\end{pspicture*}
  \caption{A cubic curve $\G f$ and its polar conic $\mathcal{C}_f(P)$ with respect to the pole $P$.
}\label{fig1}
  \end{center}
\end{figure}

It is well known that the polar conic is the product of two projective lines, $\ell_1^P$ and $\ell_2^P$ iff the determinant of the Hesse matrix evaluated at $P$ is equal to 
$0$, {\ie},
$$\mathcal{C}_\Gamma (P)=\langle X, \ell_1^P\rangle \langle X, \ell_2^P\rangle \iff \det \left( H_\Gamma (P)\right)=0.$$
In particular, we obtain the following result
\begin{prp}\label{prop1}
    If $P\in \h\Gamma$, {\ie}, the polar conic is the product of the two lines $\ell_1^P$ and $\ell_2^P$, then the tangents from $P$ to $\Gamma$ touch $\Gamma$ precisely at the points $\Gamma\cap \ell_1^P$ and $\Gamma\cap \ell_2^P$.
\end{prp}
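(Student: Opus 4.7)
The plan is to observe that Proposition~\ref{prop1} is essentially an immediate consequence of two facts already assembled in the introduction, so the proof should be short and consist mainly of chaining them together.

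First I would invoke equation~(\ref{eq2}): it gives directly that the contact points of the tangents from $P$ to $\Gamma$ are exactly the points of $\Gamma \cap \mathcal{C}_\Gamma(P)$. This is independent of whether $P$ lies on $\h\Gamma$ or not.

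Next I would use the hypothesis $P \in \h\Gamma$, which means $\det(H_\Gamma(P)) = 0$. By the factorization criterion recalled just before the statement of the proposition, this is equivalent to $\mathcal{C}_\Gamma(P)$ decomposing as a product of two projective lines $\ell_1^P$ and $\ell_2^P$, so that the zero set of $\mathcal{C}_\Gamma(P)$ in $\rpp$ is the union $\ell_1^P \cup \ell_2^P$ (interpreted set-theoretically, with multiplicities if the two lines coincide).

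Combining the two, the contact points are
\[
\Gamma \cap \mathcal{C}_\Gamma(P) \;=\; \Gamma \cap \bigl(\ell_1^P \cup \ell_2^P\bigr) \;=\; \bigl(\Gamma \cap \ell_1^P\bigr) \,\cup\, \bigl(\Gamma \cap \ell_2^P\bigr),
\]
which is exactly the assertion of the proposition. There is no real obstacle here; the only thing to be mildly careful about is making clear that we are identifying the conic $\mathcal{C}_\Gamma(P)$ with its zero locus, and that the factorization of a quadratic form whose determinant vanishes into a product of two linear forms is the content of the equivalence stated in the paragraph preceding the proposition.
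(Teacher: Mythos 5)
Your argument is exactly the one the paper intends: Proposition~\ref{prop1} is stated as an immediate consequence of the characterization of contact points via equation~(\ref{eq2}) and the factorization criterion for the polar conic, and the paper accordingly gives no separate proof. Your write-up simply makes this chaining explicit, which is correct and complete.
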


\section{Halving formulae for points on $\sectionh\Gamma$}\label{sec2}
We would now like to compute the contact points of the tangents from a point $P\in \h\Gamma$ to the curve $\h\Gamma$. By a suitable projective transformation, we may assume that the curve is of the form $E_{a,b}$ defined by
\begin{equation}\label{Eab}
E_{a,b}:\ y^2 = x^3 + a\,x^2 + b\,x
\end{equation}
in the affine plane, where\/ $a,\,b\in\R$.
\begin{prp}\label{thm:main}
Let \/ $E_{a,b}$ 
be a non-singular elliptic curve over\/ $\C$ defined by
$$E_{a,b}:\ y^2 = x^3 + a\,x^2 + b\,x$$
where\/ $a,\,b\in\C$, 
and let\/ $P=(x_0,y_0)$ be a point on~$E_{a,b}$.

Let
$$e_1=\frac{-a + \sqrt{a^2 - 4 b}}2,\qquad
e_2 =\frac{-a - \sqrt{a^2 - 4 b}}2.
$$
and let 
$$\gamma =\sqrt{x_0},\qquad
\alpha =\sqrt{x_0-e_1},\qquad
\beta  =\sqrt{x_0-e_2}.$$
Then, \/ $E_{a,b}$ is of the
form
$$y^2=x_0(x_0-e_1)(x_0-e_2)$$
and the\/ $x$-coordinates of the contact points of the tangent of $P$ with $E_{a,b}$, denoted by\/ 
$Q_1,Q_2,Q_3,Q_4$ are\vspace*{-3mm}
\begin{eqnarray*}
x_{11} & = & (\alpha+\gamma)(\beta+\gamma),\\[.8ex]
x_{12} & = & (\alpha-\gamma)(\beta-\gamma),\\[.8ex]
x_{21} & = & (\alpha+\gamma)(-\beta+\gamma),\\[.8ex]
x_{22} & = & (\alpha-\gamma)(-\beta-\gamma).
\end{eqnarray*}
Notice that for the points $Q_i$, $i=1,2,3,4$, we have $2\ast Q_i=-P$, i.e., $\frac P2 = -Q_i$, where $2*Q_i=Q_i+Q_i$ is the usual elliptic curve operation on $E_{a,b}$ (see, {\eg},~\cite{hh22}).
\end{prp}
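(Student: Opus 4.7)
I would approach this by identifying the four contact points as the four preimages of $-P$ under doubling. For a point $Q=(x,y)\in E_{a,b}$, the tangent at $Q$ meets the curve in a third point which, by the chord--tangent law, equals $-2\ast Q$. Hence this tangent passes through $P$ iff $-2\ast Q=P$, i.e., $2\ast Q=-P$. Since $-P=(x_0,-y_0)$ shares its $x$-coordinate with $P$, the $x$-coordinates of the contact points $Q_1,\ldots,Q_4$ are exactly the four solutions of $x(2\ast Q)=x_0$, and this is what one has to match with the listed $x_{ij}$.

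The standard duplication formula on $y^2=x^3+ax^2+bx$ gives
$$x(2\ast Q)=\left(\frac{3x^2+2ax+b}{2y}\right)^{\!2}-a-2x,$$
so after clearing the denominator $4y^2=4x(x-e_1)(x-e_2)$ the condition $x(2\ast Q)=x_0$ becomes
$$P(x):=(3x^2+2ax+b)^2-4(2x+a+x_0)\,x(x-e_1)(x-e_2)=0.$$
A direct expansion (using $e_1+e_2=-a$ and $e_1e_2=b$) shows that this is a monic quartic,
$$P(x)=x^4-4x_0\,x^3-(4ax_0+2b)\,x^2-4bx_0\,x+b^2.$$

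To identify the roots with the $x_{ij}$, I would group them in the pairs $\{x_{11},x_{12}\}$ and $\{x_{21},x_{22}\}$. Using $\alpha^2-\gamma^2=-e_1$ and $\beta^2-\gamma^2=-e_2$ one immediately gets
$$x_{11}x_{12}=(\alpha^2-\gamma^2)(\beta^2-\gamma^2)=e_1e_2=b=x_{21}x_{22},$$
while $x_{11}+x_{12}=2(x_0+\alpha\beta)$ and $x_{21}+x_{22}=2(x_0-\alpha\beta)$. Hence the two pairs are the root sets of the quadratics $t^2-2(x_0\pm\alpha\beta)t+b$, and their product is
$$\bigl(t^2-2x_0\,t+b\bigr)^2-4\alpha^2\beta^2\,t^2.$$
Substituting $\alpha^2\beta^2=(x_0-e_1)(x_0-e_2)=x_0^2+ax_0+b$ and matching coefficient by coefficient reproduces the quartic $P(t)$ above, proving that the $x_{ij}$ are precisely its roots.

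The only real obstacle is the bookkeeping in the two polynomial expansions: one has to keep signs and the many quadratic, cubic and quartic terms straight when comparing the duplication quartic with the product of the two quadratics built from $\alpha,\beta,\gamma$. The geometric content is just the chord--tangent identification $2\ast Q=-P$, and everything else collapses to the symmetric-function identities in $e_1,e_2,\alpha,\beta,\gamma$ displayed above; the supplementary statement $\tfrac{P}{2}=-Q_i$ is then immediate from $2\ast Q_i=-P$.
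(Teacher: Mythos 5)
Your proof is correct and follows essentially the same route as the paper: both reduce the claim to the duplication formula $x(2\ast Q)=\frac{(x^2-b)^2}{4x(x^2+ax+b)}$ and then verify the resulting algebraic identity in terms of $\alpha,\beta,\gamma$ via the relations $\alpha^2-\gamma^2=-e_1$, $\beta^2-\gamma^2=-e_2$. The only difference is organizational --- the paper substitutes each $x_{ij}$ individually and checks $(x_{ij}^2-b)^2=4x_{ij}\gamma^2(x_{ij}^2+ax_{ij}+b)$, whereas you factor the degree-four condition into the two quadratics $t^2-2(x_0\pm\alpha\beta)t+b$, which has the small bonus of showing that the four listed values are \emph{all} of the solutions.
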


\begin{proof} We obviously have $-e_1-e_2=a$ and $e_0e_1+e_1e_2+e_2e_0=b$.

To show that $x_{11},x_{12},x_{21},x_{22}$ are the 
the $x$-coordinates of points $Q\in E_{a,b}$ such that $2\mult Q=P$,
it is enough to show that the $x$-coordinate 
of the point $Q_{ij}:=(x_{ij},y)$, 
where $i,j\in\{1,2\}$ and $y=\sqrt{x_{ij}^3+a\,x_{ij}^2+b\,x_{ij}}$,
is equal to~$x_0$. Now, the $x$-coordinate $x_{2ij}$
of the point $2\mult Q_{ij}$
is given by the formula
$$x_{2ij}=\frac{x_{ij}^4-2b\,x_{ij}^2+b^2}
{4(x_{ij}^3+a\,x_{ij}^2+b\,x_{ij})}=\frac{(x_{ij}^2-b)^2}
{4x_{ij}(x_{ij}^2+a\,x_{ij}+b)}.$$
Furthermore, we have $a=\alpha^2 + \beta^2 - 2\gamma^2$ 
and $b=(\alpha^2 - \gamma^2) (\beta^2 - \gamma^2)$, and if we 
write $x_{ij},a,b$ in terms of $\gamma,\alpha,\beta$, it is
not hard to verify that 
$$(x_{ij}^2-b)^2=4x_{ij}\gamma^2(x_{ij}^2+a\,x_{ij}+b),$$
which shows that $x_{2ij}=x_0$.
\end{proof}
\section{Intersection of $\mathcal C_\Gamma(P)$ with $\sectionh \Gamma$ for $P\in \sectionh \Gamma$}
In this section, we combine the results from Section~\ref{sec2} with the property that for every point $P\in \h \Gamma$, the polar conic of $\Gamma$ with respect to the pole $P$ is the product of two lines, $\ell_1^P$ and $\ell_2^P$ (see Proposition~\ref{prop1}). In particular, we want to show that the two lines $\ell_1^P$ and $\ell_2^P$ intersect on the curve $\h \Gamma$ and that the points $x_{ij}$, $i,j=1,2$ correspond to the other intersection points of $\ell_1^P$ and $\ell_2^P$ with $\h \Gamma$ (see Figure~\ref{fig:galaxy1}).

For this, we start with the cubic curve
$$\G {a,b} :\ ax^3 + 3xy^2 +3bx^2z - b^2z^3 = 0$$
with $a,b\in \R$, $b\neq 0$.
For $\G {a,b}$, we get
$$H_{\G {a,b}}:=\begin{pmatrix}
                    6ax+6bz& 6y& 6bx\\
                    6y& 6x& 0\\
                    6bx& 0& -6b^2z
                \end{pmatrix}$$
and hence
$$\h\G {a,b} :\ y^2z=x^3+ax^2z+bxz^2.$$
In other words, $\h\G {a,b}=E_{a,b}$, as introduced in~(\ref{Eab}) in the previous section.

By definition, if $P=(x_0,y_0)\in \h \G {a,b}$, then $\det \left (H_{\G {a,b}}(P)\right) = 0$, which implies that the conic section
$$\mathcal C_{\G {a,b}} :\ (ax_0+b)x^2 + x_0y^2 + 2y_0xy + 2bx_0x - b^2$$
can be written as the product of two lines $\ell_1^P$ and $\ell_2^P$. In the following lemma, we will compute these two lines in terms of $e_1$ and $e_2$ defined in the previous section.

\begin{lem}
    The lines $\ell_1^P$ and $\ell_2^P$ are given by
    \begin{align*}
        \ell_1^P &:\ ux+vy+wz=0\\
        \ell_2^P &:\ rx+sy+tz=0
    \end{align*}
    where\vspace*{-3mm}
    \begin{align*}
        u&=-x_0-\sqrt{(e_1-x_0)(e_2-x_0)}\\
        v&=-\sqrt{x_0}\\
        w&=e_1e_2\\
        r&=-x_0+\sqrt{(e_1-x_0)(e_2-x_0)}\\
        s&=\sqrt{x_0}\\
        t&=e_1e_2.
    \end{align*}
\end{lem}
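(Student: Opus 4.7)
The plan is to verify the claimed factorization by direct expansion and coefficient matching. First, I write the polar conic $\mathcal{C}_{\G{a,b}}(P)$ in homogeneous form: plugging $P=(x_0,y_0,1)$ into the Hesse matrix computed above and applying (\ref{eq1}) yields, after dividing the uniform factor $6$,
$$\mathcal{C}_{\G{a,b}}(P):\ (ax_0+b)x^2 + x_0y^2 - b^2 z^2 + 2y_0 xy + 2b x_0 xz = 0$$
(the expression in the excerpt is the affine restriction $z=1$). The task is to show that this quadratic form factors as $\ell_1^P\cdot\ell_2^P$ for the stated $u,v,w,r,s,t$, up to an overall sign (which is irrelevant for the zero set).

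Next I record the two elementary identities that do all the work. From $e_1+e_2=-a$ and $e_1 e_2=b$, together with $P\in\h\G{a,b}=E_{a,b}$, we get
$$x_0(e_1-x_0)(e_2-x_0)=x_0\bigl(x_0^2+a x_0+b\bigr)=y_0^2.$$
Writing $D:=\sqrt{(e_1-x_0)(e_2-x_0)}$, so that $u=-x_0-D$ and $r=-x_0+D$, this identity reads $\sqrt{x_0}\,D=\pm y_0$. I fix the signs of the square roots $\sqrt{x_0}$ and $D$ once and for all so that $\sqrt{x_0}\,D=y_0$.

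With this preparation, expanding $(ux+vy+wz)(rx+sy+tz)$ and comparing to $-\mathcal{C}_{\G{a,b}}(P)$ reduces to six coefficient checks:
\begin{align*}
ur &= x_0^2-D^2 = x_0^2-(e_1-x_0)(e_2-x_0) = -(ax_0+b),\\
vs &= -x_0,\\
wt &= (e_1e_2)^2 = b^2,\\
us+vr &= -2\sqrt{x_0}\,D = -2y_0,\\
ut+wr &= -2x_0\,e_1e_2 = -2bx_0,\\
vt+ws &= 0.
\end{align*}
All six lines match the coefficients of $-\mathcal{C}_{\G{a,b}}(P)$, so $\ell_1^P\cdot\ell_2^P$ and $\mathcal{C}_{\G{a,b}}(P)$ define the same conic, proving the lemma.

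The computation is entirely routine once the sign convention for the square roots is made explicit. The only conceptual point, and the one step where I expect students to stumble, is the identity $x_0(e_1-x_0)(e_2-x_0)=y_0^2$: it is exactly here that the hypothesis $P\in\h\G{a,b}$ enters. Everything else is symmetric-function bookkeeping using $e_1+e_2=-a$ and $e_1e_2=b$.
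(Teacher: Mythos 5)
Your proposal is correct and follows essentially the same route as the paper: substitute $a=-e_1-e_2$, $b=e_1e_2$ and use $y_0^2=x_0^3+ax_0^2+bx_0$ (i.e.\ $x_0(e_1-x_0)(e_2-x_0)=y_0^2$, which is where $P\in E_{a,b}$ enters), then verify the six coefficient identities of the product $(ux+vy+wz)(rx+sy+tz)$ against $-\mathcal{C}_{\G{a,b}}(P)$. Your list of identities is in fact the accurate one --- the paper's displayed checks contain two slips ($vs=-x_0$, not $-x_0^2$, and $wt=+e_1^2e_2^2$, not $-e_1^2e_2^2$) --- and your explicit fixing of the square-root signs so that $\sqrt{x_0}\,\sqrt{(e_1-x_0)(e_2-x_0)}=y_0$ is a welcome clarification the paper leaves implicit.
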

\begin{proof}
    We first replace $y_0$ by $\sqrt{x_0^3+ax_0^2+bx_0}$, 
    and $a$, $b$ by $-e_1-e_2$ and $e_1e_2$ respectively. Then, we have
    $$\mathcal C_{\G {a,b}} :\ (e_1e_2-e_1x_0-e_2x_0)\;x^2 + x_0\;y^2 + 2\sqrt{x_0^3-e_1x_0^2-e_2x_0^2+e_1e_2x_0}\;xy + 2e_1e_2x_0\;x - e_1^2e_2^2.$$
    This leaves us with the easy exercise to check that
    \begin{align*}
        &ur=-e_1e_2+e_1x_0+e_2x_0\\
        &us+vr=-2\sqrt{x_0^3-e_1x_0^2-e_2x_0^2+e_1e_2x_0}\\
        &vs=-x_0^2\\
        &ut+wr=-2e_1e_2x_0\\
        &wt=-e_1^2e_2^2\\
        &vt+ws=0
    \end{align*}
    hence completing the proof.
\end{proof}

Before we compute the intersection points of $\ell_1^P$ and $\ell_2^P$ with the curve $E_{a,b}$, we show that the two lines intersect on the curve $E_{a,b}$.

\begin{lem}
     Let $P=(x_0,y_0)$ be a point on $\h\G {a,b}$. Then the point $\ell_1^P \cap \ell_2^P =:S=(x_S,y_S)$ lies on the same curve $\h\G {a,b}=E_{a,b}$.
\end{lem}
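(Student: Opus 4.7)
The plan is to compute the point $S$ explicitly in homogeneous coordinates as the cross product of the coefficient vectors of $\ell_1^P$ and $\ell_2^P$, then substitute directly into the defining equation of $E_{a,b}$.

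First I would form $S = (vt-ws,\, wr-ut,\, us-vr)$. Using the identity $vt+ws=0$ from the previous lemma (which gives $vt-ws = 2vt$), together with $w=t=e_1e_2$, the first component reduces to $-2 e_1 e_2 \sqrt{x_0}$. Because $w=t$, the second component is $e_1e_2(r-u) = 2e_1e_2\sqrt{(e_1-x_0)(e_2-x_0)}$. A short direct computation of $us-vr$, in which the two terms containing $\sqrt{x_0(e_1-x_0)(e_2-x_0)}$ cancel, yields $-2x_0\sqrt{x_0}$. After dividing by the common factor $-2\sqrt{x_0}$, we obtain the clean representative
\[
S=\bigl(e_1e_2,\ -\tfrac{e_1e_2}{\sqrt{x_0}}\sqrt{(e_1-x_0)(e_2-x_0)},\ x_0\bigr),
\]
so in the affine chart $z=1$ one has $x_S = e_1e_2/x_0 = b/x_0$ and
\[
y_S = -\frac{e_1e_2\sqrt{(e_1-x_0)(e_2-x_0)}}{x_0\sqrt{x_0}}.
\]

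Next I would verify that $(x_S,y_S)$ satisfies $y^2 = x(x-e_1)(x-e_2)$, which is the equation of $E_{a,b}$ after using $a=-(e_1+e_2)$ and $b=e_1e_2$. Squaring gives $y_S^2 = e_1^2e_2^2(e_1-x_0)(e_2-x_0)/x_0^3$. On the other hand,
\[
x_S(x_S-e_1)(x_S-e_2)=\frac{e_1e_2}{x_0}\cdot\frac{e_1(e_2-x_0)}{x_0}\cdot\frac{e_2(e_1-x_0)}{x_0}=\frac{e_1^2e_2^2(e_1-x_0)(e_2-x_0)}{x_0^3},
\]
so the two sides agree and $S \in E_{a,b}$.

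The computation is essentially routine; the only mild obstacle is bookkeeping the square roots (choosing consistent branches for $\sqrt{x_0}$ and $\sqrt{(e_1-x_0)(e_2-x_0)}$ so that $y_0 = \sqrt{x_0(x_0-e_1)(x_0-e_2)}$ matches the original $y_0$), but the identities $vt+ws=0$ and $w=t$ supplied by the previous lemma make the cancellations in the cross product transparent, so I would present the argument in this compact form rather than through a brute symbolic expansion of $\mathcal{C}_{\G{a,b}}(P)$ from scratch.
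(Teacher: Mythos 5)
Your proposal is correct and follows essentially the same route as the paper: compute the intersection point explicitly (you via the cross product of the homogeneous line coordinates, the paper by solving the two affine line equations), arrive at the same representative $S=(b/x_0,\,-b y_0/x_0^2)$, and verify membership in $E_{a,b}$ by direct substitution. The only cosmetic difference is that you check the factored form $y^2=x(x-e_1)(x-e_2)$ while the paper substitutes $y_0^2=x_0^3+ax_0^2+bx_0$; both are the same one-line verification.
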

\begin{proof}
    Let us rewrite the lines as
    $$
        \ell_1^P :\ y=\frac{-u}{v}x+\frac{-w}{v}\qquad\text{and}\qquad
        \ell_2^P :\ y=\frac{-r}{s}x+\frac{-t}{s}
    $$
    which shows that the intersection of the two lines is given as:
    \begin{align*}
        x_S&=\frac{tv-ws}{us-rv}=\frac {e_1e_2}{x_0}=\frac b{x_0}\\
        y_S&=\frac{-u}{v}x+\frac{-w}{v}=-\frac{by_0}{x_0^2}
    \end{align*}
    Now, we use the fact that
    $$y_0^2=x_0^3+ax_0^2+bx_0$$
    to show that
    $$\Bigl(\frac{b}{x_0}\Bigr)^3+a\Bigl(\frac{b}{x_0}\Bigr)^2+b\Bigl(\frac{b}{x_0}\Bigr)=\Bigl(\frac{-by_0}{x_0^2}\Bigr)^2$$
    hence proving that $S\in E_{a,b}$.
\end{proof}

\begin{lem}
    The map   $\h \G {a,b}\to\h \G {a,b},\ P=(x_0,y_0)\mapsto S=(x_S,y_S)$, is an involution. 
\end{lem}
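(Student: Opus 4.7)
The plan is essentially a one-line substitution. The preceding lemma expresses the map $P\mapsto S$ by the closed-form rational formula
$$\Phi\colon (x_0,y_0)\ \longmapsto\ \Bigl(\tfrac{b}{x_0},\ -\tfrac{b y_0}{x_0^{2}}\Bigr),$$
so the involution claim reduces to verifying $\Phi\circ\Phi=\mathrm{id}$. This I would check directly: the first coordinate of $\Phi(\Phi(P))$ is $b/(b/x_0)=x_0$, and the second is $-b\cdot(-by_0/x_0^{2})/(b/x_0)^{2}=y_0$.

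Before collapsing the proof into that two-line computation, I would briefly justify that the formula is legitimately re-applied at~$S$. The previous lemma shows $S\in\h\G{a,b}$, and it derives the formula by taking the polar conic of $\G{a,b}$ at an arbitrary pole on $\h\G{a,b}$ and splitting it into the two linear factors $\ell_1,\ell_2$. Running the identical derivation with $S$ playing the role of the pole therefore produces $\Phi(S)=(b/x_S,\,-by_S/x_S^{2})$ without any fresh computation.

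The only mild obstacle is the locus where $x_0=0$, on which the affine formula is ill-defined. To cover it, I would pass to homogeneous coordinates, in which $\Phi$ reads
$$(x:y:z)\ \longmapsto\ (b x z\,:\,-byz\,:\,x^{2}).$$
Applying $\Phi$ twice projectively gives $(b^{2}x^{3}z : b^{2}x^{2}yz : b^{2}x^{2}z^{2})$, which equals $(x:y:z)$ after clearing the common factor $b^{2}x^{2}z$. This establishes the involution on a Zariski-dense open subset of $\h\G{a,b}$; the finitely many remaining points (the $2$-torsion point with $x=0$ and the point at infinity) either extend by continuity or can be handled by a direct projective check, and in any case the standing hypothesis $b\neq 0$ excludes the truly singular situation.
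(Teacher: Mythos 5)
Your proposal is correct and matches the paper's own proof, which is exactly the same two-line substitution check that $b/(b/x_0)=x_0$ and $-b\cdot(-by_0/x_0^2)/(b/x_0)^2=y_0$. Your additional remarks on re-deriving the formula at $S$ and on handling the locus $x_0=0$ via homogeneous coordinates are careful touches the paper omits, but they do not change the substance of the argument.
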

\begin{proof}
    We check that
    $$
    \frac{b}{\frac{b}{x_0}}=x_0\qquad\text{and}\qquad
    \frac{-by_0}{x_0^2}=\frac{-b\cdot\frac{-by_0}{x_0^2}}{\frac{b^2}{x_0^2}}=y_0
    $$
    hence completing the proof.
\end{proof}
Now, we show that the other intersection points of $\ell_1^P$ and $\ell_2^P$ with $E_{a,b}$ are exactly the points $x_{ij}$, $i,j=1,2$ from Proposition~\ref{thm:main}.

\begin{lem}
    Besides the point $S$, the intersection points of $\ell_1^P$ and $\ell_2^P$ with $E_{a,b}$ are exactly the points $Q_i$, $i=1,2,3,4$, with $2\ast Q_i=-P$. More precisely, the points $x_{11}$ and $x_{12}$ are on the line $\ell_1^P$ and the points $x_{21}$ and $x_{22}$ are on $\ell_2^P$.
\end{lem}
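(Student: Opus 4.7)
The plan is a direct algebraic substitution: for each candidate point $(x_{ij},y_{ij})$ with $y_{ij}^2=x_{ij}^3+ax_{ij}^2+bx_{ij}$, verify that the equation of the appropriate line $\ell_k^P$ is satisfied. Throughout I would retain the abbreviations $\gamma=\sqrt{x_0}$, $\alpha=\sqrt{x_0-e_1}$, $\beta=\sqrt{x_0-e_2}$ of Proposition~\ref{thm:main}, noting that $y_0^2=x_0(x_0-e_1)(x_0-e_2)=(\alpha\beta\gamma)^2$ and $(e_1-x_0)(e_2-x_0)=\alpha^2\beta^2$, so that with branches fixed consistently the two lines simplify to
$$\ell_1^P:\ -(\gamma^2+\alpha\beta)\,x-\gamma\,y+e_1e_2=0\quad\text{and}\quad \ell_2^P:\ (-\gamma^2+\alpha\beta)\,x+\gamma\,y+e_1e_2=0.$$
Using $e_1=\gamma^2-\alpha^2$ and $e_2=\gamma^2-\beta^2$, I would factor $x_{11}-e_1=(\alpha+\gamma)(\alpha+\beta)$ and $x_{11}-e_2=(\beta+\gamma)(\alpha+\beta)$, which yields $y_{11}^2=[(\alpha+\gamma)(\beta+\gamma)(\alpha+\beta)]^2$ together with analogous factorisations for $y_{12}, y_{21}, y_{22}$.

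Plugging $(x_{11},y_{11})$ into $\ell_1^P$ with the appropriate sign choice $y_{11}=-(\alpha+\gamma)(\beta+\gamma)(\alpha+\beta)$ and factoring $(\alpha+\gamma)(\beta+\gamma)$ from the first two terms reduces the verification to the elementary identity
$$-(\gamma^2+\alpha\beta)+\gamma(\alpha+\beta)=-(\gamma-\alpha)(\gamma-\beta),$$
after which the remaining factor $(\alpha+\gamma)(\gamma-\alpha)(\beta+\gamma)(\gamma-\beta)=(\gamma^2-\alpha^2)(\gamma^2-\beta^2)=e_1e_2$ cancels exactly the constant term of the line, giving zero. The case $(x_{12},\ell_1^P)$ is recovered from the simultaneous sign flip $(\alpha,\beta)\mapsto(-\alpha,-\beta)$, and the two cases on $\ell_2^P$ from flipping only one of $\alpha,\beta$; this symmetry explains the pairing $\{x_{11},x_{12}\}\subset\ell_1^P$ versus $\{x_{21},x_{22}\}\subset\ell_2^P$.

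The principal difficulty is the bookkeeping of sign choices: each of $\gamma,\alpha,\beta$ and $\sqrt{(e_1-x_0)(e_2-x_0)}$ admits two branches, and a coherent matching must be threaded through the $y_{ij}$, through the formulas for $u,v,w,r,s,t$, and through the group-law condition $2\ast Q_i=-P$. To sidestep this entirely, I could argue via Vieta's formulas instead: substituting $y=-(ux+w)/v$ into $y^2=x^3+ax^2+bx$ produces a monic cubic whose three roots are the $x$-coordinates of $\ell_1^P\cap E_{a,b}$. By the preceding lemma one root is $x_S=b/x_0=e_1e_2/\gamma^2$, and the elementary identities $x_{11}+x_{12}=2(\alpha\beta+\gamma^2)$ and $x_{11}x_{12}=(\alpha^2-\gamma^2)(\beta^2-\gamma^2)=e_1e_2$, combined with the explicit formulas for $u,v,w$, reduce the whole assertion to checking the Vieta sum and product for the two remaining roots, without any branch choice for $y$ being necessary.
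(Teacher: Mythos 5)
Your algebra for the $x$-coordinates is sound: the factorisations $x_{11}-e_1=(\alpha+\gamma)(\alpha+\beta)$, $x_{11}-e_2=(\beta+\gamma)(\alpha+\beta)$ and the identity $-(\gamma^2+\alpha\beta)+\gamma(\alpha+\beta)=-(\gamma-\alpha)(\gamma-\beta)$ check out, so the direct substitution does verify that points with abscissae $x_{11},x_{12}$ lie on $\ell_1^P$ and points with abscissae $x_{21},x_{22}$ lie on $\ell_2^P$. Your Vieta fallback is in fact essentially the paper's own route: the paper eliminates $y$, divides the resulting cubic by the known root $b/x_0$, and identifies the roots of the quadratic $x^2-2(x_0\pm\alpha\beta)x+e_1e_2=0$ with the $x_{ij}$, which is exactly your sum-and-product computation. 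Your substitution variant is the more informative of the two, since it also pins down the $y$-coordinates of the intersection points explicitly.

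The gap is in the second half of the assertion, namely that the intersection points are the points $Q_i$ \emph{with} $2\ast Q_i=-P$. Proposition~\ref{thm:main}, as proved, only establishes that the $x$-coordinate of $2\ast Q_{ij}$ equals $x_0$; hence a point of $E_{a,b}$ with abscissa $x_{ij}$ satisfies $2\ast Q=-P$ or $2\ast Q=+P$ according to the sign of its ordinate, and knowing the $x$-coordinates of $\ell_k^P\cap E_{a,b}$ does not decide between the two. Your claim that the Vieta route ``sidesteps this entirely'' is therefore not accurate: it sidesteps the branch bookkeeping in the $x$-computation, but the sign in the group-law condition is precisely a statement about $y$-coordinates and is invisible to Vieta. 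This is the part the paper spends the second half of its proof on: writing $A,B$ for the two further intersection points on one line, collinearity gives $A+B=-S$, so $2\ast A=2\ast B=-P$ forces $2\ast P=2\ast S$, and the paper then verifies $2\ast P=2\ast S$ with the explicit doubling formula, which (off the degenerate cases) excludes the sign $+P$. To close your argument you must either reproduce some such group-law step, or exploit the fact that your substitution already fixes $y_{11}=-(\alpha+\gamma)(\beta+\gamma)(\alpha+\beta)$ etc.\ and check directly via the duplication formula that the $y$-coordinate of $2\ast(x_{11},y_{11})$ is $-y_0$ rather than $y_0$.
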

\begin{proof}
To find the $x$-coordinate of  the intersection points of $\ell_i^P$ with $E_{a,b}$ we eliminate $y$ form the equations for $E_{a,b}$ and $\ell_1^P$, and $\ell_2^P$, respectively.
    The resulting equations are of degree $3$ in $x$, but since we already know the root $\frac b{x_0}$, the problem reduces to quadratic equations
    $$x^2-2\left(x_0\pm \alpha\beta\right)x+e_1e_2=0.$$
    The solutions are $$x_0+\alpha\beta\pm \sqrt{\left(x_0+\alpha\beta\right)^2-e_1e_2}\qquad\text{and}\qquad
    x_0-\alpha\beta\pm \sqrt{\left(x_0-\alpha\beta\right)^2-e_1e_2}$$
    and one checks easily that these expressions agree with the formulas for $x_{ij}$ from Proposition~\ref{thm:main}.
    
    It remains to show that the $y$-coordinates match as well. To see that, let us denote by $A$ and $B$, the points at which the tangents from $P$ to $E_{a,b}$ meet $E_{a,b}$. So, for our claim to be true, we have
    \begin{align*}
        &A+B=-S\\
        &2\ast A=-P\\
        &2\ast B=-P\\
    \end{align*}
    which implies
    $$2\ast P= 2\ast S$$
    and hence it is enough to show that this is indeed true. 

    To do so, we note that a formula for doubling the point $P=(x_0,y_0)$ on $E_{a,b}$ is given by
    $$2\ast P=\left(\frac{\left(x_0^2-e_1e_2\right)^2}{4y_0^2}, \frac{\left(x_0^2-e_1e_2\right)\left(e_1e_2-2e_1x_0+x_0^2\right)\left(e_1e_2-2e_2x_0+x_0^2\right)}{8y_0^3}\right)$$
    and hence
    $$2\ast S=\left(\frac{\left(\frac{e_1^2e_2^2}{x_0^2}-e_1e_2\right)^2}{4\cdot\frac{e_1^2e_2^2y_0^2}{x_0^4}}, \frac{\left(\frac{e_1^2e_2^2}{x_0^2}-e_1e_2\right)\left(e_1e_2-2e_1\cdot\frac{e_1e_2}{x_0}+\frac{e_1^2e_2^2}{x_0^2}\right)\left(e_1e_2-2e_2\cdot\frac{e_1e_2}{x_0}+\frac{e_1^2e_2^2}{x_0^2}\right)}{-8\cdot\frac{e_1^3e_2^3y_0^3}{x_0^6}}\right)$$
    since $S=\left(\frac{e_1e_2}{x_0},\frac{-e_1e_2y_0}{x_0^2}\right)$.

    The equality of $2\ast P$ and $2\ast S$ immediately follows by multiplying the numerator and denominator $x$ coordinate of $2\ast S$ by $\frac{x_0^4}{e_1^2e_2^2}$ and the $y$ coordinate by $\frac{-x_0^6}{e_1^3e_2^3}$.
\end{proof}

\begin{thm}\label{thm:7}
    Let $\Gamma$ be a cubic curve and let $P\in \h \Gamma$. Then, all the contact points of tangents from $P$ to the curves $\Gamma$ and $\h \Gamma$ are intersection points of $\ell_1^P$ and $\ell_2^P$ with $\Gamma$ and $\h \Gamma$. In addition, the intersection $Q$ of $\ell_1^P$ and  $\ell_2^P$ lies on $\h \Gamma$ (see  Figure~\ref{fig:galaxy1}).
\end{thm}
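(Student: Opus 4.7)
The plan is to reduce the general claim to the special case $\Gamma = \G{a,b}$ that has just been handled in the three preceding lemmas. The reduction rests on the projective equivariance of every operation that appears in the statement. Under a linear change of coordinates $X = AY$ the Hesse matrix transforms by $H_{f\circ A}(Y) = A^{T}\,H_f(AY)\,A$, so that $\det H_{f\circ A}(Y) = (\det A)^{2}\,\det H_f(AY)$. Consequently $T(\h\Gamma) = \h(T\Gamma)$ for every projective transformation $T$, and an analogous equivariance holds for polar conics (by formula~(\ref{eq1})), for the factorisation of such a polar conic into two lines $\ell_1^P$ and $\ell_2^P$ when the pole lies on $\h\Gamma$, and for the tangency--contact incidence. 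It therefore suffices to verify the theorem for one representative of each projective equivalence class of cubic curves.

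As the representative I would pick a member of the family $\G{a,b}$ from the beginning of this section, whose Hesse derivative is the Weierstrass cubic $E_{a,b}$; any smooth cubic can be brought into this form after moving a flex to a standard position and matching the flex tangent. For the normalised curve $\G{a,b}$ and a point $P=(x_0,y_0)\in\h\G{a,b}$, the three lemmas just proved assert successively that $\mathcal{C}_{\G{a,b}}(P)=\ell_1^P\,\ell_2^P$, that $S=\ell_1^P\cap\ell_2^P$ lies on $\h\G{a,b}$, and that the remaining intersections of $\ell_i^P$ with $\h\G{a,b}$ are precisely the four halving points $Q_i$ of $P$, i.e.\ the contact points of the tangents from $P$ to $\h\G{a,b}$. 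Combined with Proposition~\ref{prop1}, which identifies the contact points of the tangents from $P$ to $\G{a,b}$ with the six points of $\G{a,b}\cap(\ell_1^P\cup\ell_2^P)$, this proves the theorem in the normal form; applying the inverse of the normalising projective transformation then delivers the theorem in general.

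The step I expect to be most delicate is the normalisation: one has to ensure that the two-parameter family $\G{a,b}$ really does cover, up to projective equivalence, every cubic for which the statement is non-trivial, and to deal with degenerate parameter values (for instance $b=0$, where $\h\G{a,b}$ becomes reducible) as well as with singular $\Gamma$ whose Hessian is not of Weierstrass type. In those cases one must either invoke a continuity argument within the moduli of cubics or state the corresponding exclusions explicitly.
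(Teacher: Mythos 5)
Your proposal is correct and takes essentially the same route as the paper: Theorem~\ref{thm:7} carries no separate proof there precisely because it is the combination, via projective equivariance, of Proposition~\ref{prop1} with the lemmas immediately preceding it for the normalised curve $\G{a,b}$ and its Hesse derivative $E_{a,b}$. Your explicit verification that $H_{f\circ A}(Y)=A^{T}H_f(AY)A$ (hence that $\h$ and the polar-conic construction commute with projective maps), and your flagging of the reduction to the family $\G{a,b}$ as the one delicate step, only make explicit what the paper leaves implicit.
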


\begin{figure}[H]
    \begin{center}
    \newrgbcolor{darkgreen}{0 .6 0}
    \psset{xunit=.5cm,yunit=.5cm,algebraic=true,dimen=middle,dotstyle=o,dotsize=5pt 0,linewidth=1.6pt,arrowsize=3pt 2,arrowinset=0.25}
\begin{pspicture*}(-12,-8)(12,9)
\psplotImp[linewidth=1pt,linecolor=red,stepFactor=0.2](-21.0,-12.0)(14.0,11.0){-27.0+73.76537180435969*y^2-31.176914536239792*x^1*y^2+73.76537180435969*x^2+10.392304845413264*x^3}
\psplotImp[linewidth=1pt,linecolor=blue,stepFactor=0.2](-21.0,-12.0)(14.0,11.0){27.0+19.76537180435968*y^2+31.176914536239792*x^1*y^2+19.76537180435968*x^2-10.392304845413264*x^3}

\psplot[linewidth=1pt,linecolor=darkgreen]{-19.99853147004773}{13.932821077832964}{(-39.69239667605544--1.1543667563422733*x)/15.822658235352947}
\psplot[linewidth=1pt,linecolor=darkgreen]{-19.99853147004773}{13.932821077832964}{(-2.1291893393761985-8.119971964761655*x)/-9.135955598608435}


\psplot[linewidth=0.5pt,linecolor=red]{-19.99853147004773}{13.932821077832964}{(--34.67247972264892--5.723645661265573*x)/10.659785235528956}
\psplot[linewidth=0.5pt,linecolor=red]{-19.99853147004773}{13.932821077832964}{(-29.846836876829634-5.430852406960389*x)/-8.686297570583545}
\psplot[linewidth=0.5pt,linecolor=red]{-19.99853147004773}{13.932821077832964}{(-1.722894256112529-2.396326303496082*x)/1.523829636920521}
\psplot[linewidth=0.5pt,linecolor=blue]{-19.99853147004773}{13.932821077832964}{(--6.364114857914801-4.276485650618115*x)/7.136360664769403}
\psplot[linewidth=0.5pt,linecolor=red]{-19.99853147004773}{13.932821077832964}{(--1.8375274275844449-1.6019801794243502*x)/2.4175655945104486}
\psplot[linewidth=0.5pt,linecolor=blue]{-19.99853147004773}{13.932821077832964}{(-7.20541276356701-4.709067807039191*x)/1.207049567376623}
\begin{small}
\psdots[dotsize=3pt 0,linecolor=blue](-2.0778687703312855,2.1369562938978546)
\rput[bl](-2.03,2.438624270316532){\blue{$P$}}
\psdots[dotsize=3pt 0,linecolor=black](-10.76416634091483,-3.293896113062534)
\psdots[dotsize=3pt 0,linecolor=black](-0.5540391334107644,-0.2593700095982276)
\psdots[dotsize=3pt 0,linecolor=black](0.339696824179163,0.5349761144735043)
\psdots[dotsize=3pt 0,linecolor=black](8.58191646519767,7.860601955163428)
\psdots[dotsize=3pt 0,linecolor=black](-0.8708192029546624,-2.5721115131413366)
\psdots[dotsize=3pt 0,linecolor=black](5.058491894438117,-2.139529356720261)
\psdots[dotsize=3pt 0,linecolor=black](-3.3605217194792876,-2.7537516506319566)
\rput[bl](-3.6093587588140945,-3.6){\blue{$Q$}}
\rput[bl](-7,-2.83){\textcolor{darkgreen}{$\ell_1^P$}}
\rput[bl](3.18,2.5){\textcolor{darkgreen}{$\ell_2^P$}}
\rput[bl](2.9,8){\red{$\Gamma$}}
\rput[bl](6,2){\blue{$\blueh\hspace*{-1pt}\Gamma$}}
\end{small}
\end{pspicture*}
    \caption{Illustration for Theorem~\ref{thm:7} for a cubic curve $\Gamma$ with the symmetry group of an equilateral triangle (see~\cite{hhh}).  The Hesse derivative $\h\Gamma$ has the same symmetry.  The curve $\Gamma$ is given by $2 \sqrt{3} x^3+9 (\sqrt{3}+1) (x^2+y^2)z-6 \sqrt{3} x y^2-9 z^3=0$ and has the property that $\h^2\Gamma=\Gamma$ 
    (see Section~\ref{sec:loops}).}
    \label{fig:galaxy1}
    \end{center}
\end{figure}
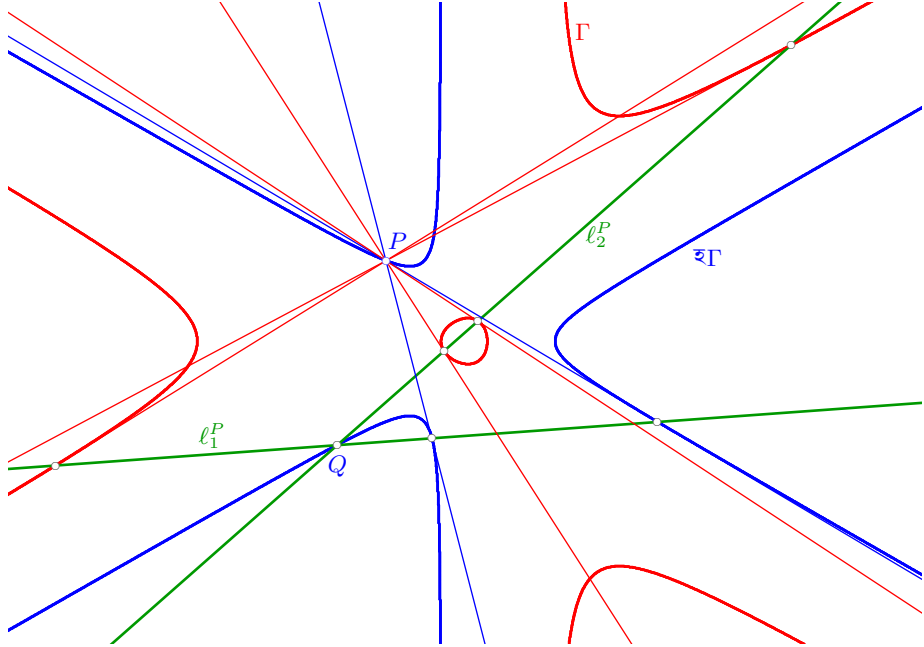

\section{Hesse Form of Cubic Curves}
In this section, we consider a cubic curve in its Hesse form
$$\G c :\ x^3+y^3+z^3+c\,xyz=0$$
with $c\in\R$. Notice $\G{-3}$ is a degenerate curve. Formally, we put
$$
\G{\infty} :\ xyz=0.
$$
\begin{lem}\label{c0}
Let $c_0\neq 0$. Then the Hesse derivative of 
$\G{c_0}$ is $\h \G{c_0}=\G{c_1}$ where
$$c_1=-\frac{108+c_0^3}{3c_0^2}.$$
The Hesse derivative of $\G{0}$ is $\h\G0=\G{\infty}$, and
the Hesse derivative of $\G{\infty}$ is $\h\G{\infty}=\G{\infty}$.
\end{lem}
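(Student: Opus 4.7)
The plan is to compute the Hesse matrix $H_f$ of $f(x,y,z)=x^3+y^3+z^3+c_0xyz$ directly, expand its determinant, and compare the resulting cubic polynomial with the Hesse form $x^3+y^3+z^3+c_1xyz$. Since the claim is purely a formula identification, the proof should be a single routine computation followed by three quick case distinctions for $c_0\ne 0$, $c_0=0$, and $c_0=\infty$.

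First, I would differentiate: $f_{xx}=6x$, $f_{yy}=6y$, $f_{zz}=6z$, while $f_{xy}=c_0 z$, $f_{xz}=c_0 y$, $f_{yz}=c_0 x$. This gives
\[
H_f \;=\; \begin{pmatrix} 6x & c_0 z & c_0 y \\ c_0 z & 6y & c_0 x \\ c_0 y & c_0 x & 6z \end{pmatrix}.
\]
Expanding along the first row, the diagonal term contributes $6x(36yz-c_0^2x^2)$, and the two off-diagonal terms each produce one summand of $c_0^3 xyz$ together with a $-6c_0^2$ times a pure cube. Collecting,
\[
\det H_f \;=\; -6c_0^{\,2}\,(x^3+y^3+z^3)\;+\;(216+2c_0^{\,3})\,xyz.
\]

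For $c_0\ne 0$, dividing through by $-6c_0^{\,2}$ (which only rescales the defining polynomial and hence does not change the projective zero set) yields exactly $x^3+y^3+z^3+c_1xyz$ with
\[
c_1 \;=\; -\frac{216+2c_0^{\,3}}{6c_0^{\,2}} \;=\; -\frac{108+c_0^{\,3}}{3c_0^{\,2}},
\]
proving $\h\G{c_0}=\G{c_1}$. For $c_0=0$, the same formula gives $\det H_f=216\,xyz$, so $\h\G{0}$ is the curve $xyz=0$, which is $\G{\infty}$ by definition. Finally, for $\G{\infty}$ itself, one computes directly that the Hesse matrix of $g(x,y,z)=xyz$ has $\det H_g = 2xyz$, so $\h\G{\infty}=\G{\infty}$.

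There is no real obstacle here; the only thing to be careful about is the bookkeeping of signs and the factor $-6c_0^{\,2}$ in the determinant expansion, and the convention that multiplying the defining polynomial by a nonzero scalar does not alter the projective curve.
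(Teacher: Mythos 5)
Your computation is correct and follows exactly the route of the paper's own proof: write down the same Hesse matrix, expand the determinant to get $-6c_0^{2}(x^3+y^3+z^3)+2(108+c_0^{3})\,xyz$, rescale by $-6c_0^{2}$ to read off $c_1$, and check the cases $c_0=0$ and $xyz=0$ directly (the paper merely asserts these last two are "easily checked," while you carry them out, e.g.\ $\det H_{xyz}=2xyz$). No differences of substance.
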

\begin{proof}
   We have
    $$H_{\G{c_0}}(x,y,z):=\begin{pmatrix}
                    6x& c_0z& c_0y\\
                    c_0z& 6y& c_0x\\
                    c_0y& c_0x& 6z
                \end{pmatrix}.$$
    This yields $\det H_{\G{c_0}} (x,y,z)= 
    - 6 c^2 (x^3 + y^3 + z^3) + 2 (108 + c^3) x y z $, and the claim follows for $c_0\in\R\setminus\{0\}$. The cases $\G0$ and 
    $\G{\infty}$ are also easily checked.
\end{proof}
An immediate corollary is
\begin{cor}
    Let $c_0\neq 0$.
    Then, the $(n+1)$-th Hesse derivative of $\G{c_0}$ is given by
    $$\h^{n+1}\G{c_0} :\ x^3+y^3+z^3+c_{n+1} xyz=0$$
    where $c_{n+1}=-\frac{108+c_n^3}{3c_n^2}$ for every $n\ge 0$, as long as $c_n\neq 0$.
\end{cor}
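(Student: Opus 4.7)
The corollary follows from Lemma~\ref{c0} by a routine induction on $n$, using the composition property $\h^{n+1}\Gamma=\h(\h^n\Gamma)$ of the iteration operator.

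My plan is to induct on $n\ge 0$. For the base case $n=0$, the claim is that $\h\G{c_0}=\G{c_1}$ with $c_1=-(108+c_0^3)/(3c_0^2)$, which is precisely the content of Lemma~\ref{c0} (this is where the hypothesis $c_0\neq 0$ is used to guarantee that the result is again of Hesse form rather than the degenerate $\G{\infty}$).

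For the inductive step, suppose that for some $n\ge 0$ we have already established $\h^n\G{c_0}=\G{c_n}$ with the coefficients $c_0,c_1,\dots,c_n$ produced by the recursion, and assume additionally $c_n\neq 0$. Then
\[
\h^{n+1}\G{c_0}=\h\!\left(\h^n\G{c_0}\right)=\h\G{c_n},
\]
and applying Lemma~\ref{c0} once more to the parameter $c_n\neq 0$ (which is legitimate precisely by the standing assumption recorded in the statement) yields $\h\G{c_n}=\G{c_{n+1}}$ with
\[
c_{n+1}=-\frac{108+c_n^3}{3c_n^2},
\]
which matches the claimed recursion and completes the induction.

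There is essentially no obstacle here: the only subtle point is the side condition ``as long as $c_n\neq 0$'', which is needed because if some $c_n$ vanished then Lemma~\ref{c0} would output $\G{\infty}$, taking us out of the family $\{\G c\mid c\in\R\}$ and breaking the recursion. Under the stated hypothesis, this degenerate case is excluded at every step of the induction.
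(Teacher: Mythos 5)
Your induction is correct and is exactly the argument the paper has in mind: the authors state this as an ``immediate corollary'' of Lemma~\ref{c0} without writing out the (routine) induction, and your handling of the side condition $c_n\neq 0$ matches the intended reading.
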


\section{Analysis of iterates}
Motivated by Lemma~\ref{c0}, we consider the function
\begin{equation}h(x)=\frac{a+x^3}{bx^2}\label{h}\end{equation}
for $a\in\R$ and $b\in\R\setminus\{0\}$.

\begin{lem}\label{lem:10}
    The function $h$ defined in~(\ref{h}) has a pole at $x=0$, and an oblique asymptote $y=\frac x b$. For $b\neq -1$
    $$\phi:=\sqrt[3]{\frac a{b-1}}$$
    is the unique real fixed point of $h$. The function $h$ has the unique critical point $\kappa :=\sqrt[3]{2a}$ with critical value
    $$h(\kappa)=\frac{3a^{1/3}}{2^{2/3}b}.$$
\end{lem}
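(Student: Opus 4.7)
All four claims reduce to elementary one-variable calculus once I record the convenient decomposition
\[
h(x) \;=\; \frac{a+x^3}{bx^2} \;=\; \frac{x}{b}+\frac{a}{bx^2},
\]
so my first step is just to write this identity down. From it, the pole at $x=0$ is immediate (the second summand blows up as $x\to 0$), and the oblique asymptote $y=x/b$ drops out at once since $h(x)-x/b=a/(bx^2)\to 0$ as $|x|\to\infty$.

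For the fixed-point claim I clear denominators in $h(x)=x$, i.e.\ multiply by $bx^2$, to obtain the polynomial equation $(b-1)x^3=a$. Whenever the coefficient $b-1$ does not vanish, this has a unique real solution, namely $\phi=\sqrt[3]{a/(b-1)}$, because the real cube root is single-valued; any other real fixed point would have to satisfy the same cubic relation and so coincides with $\phi$.

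For the critical point I differentiate the decomposition termwise,
\[
h'(x)=\frac{1}{b}-\frac{2a}{bx^3}=\frac{x^3-2a}{bx^3},
\]
which vanishes on the domain of $h$ precisely when $x^3=2a$, giving the unique real critical point $\kappa=\sqrt[3]{2a}$. Substituting $\kappa$ back into $h$ and simplifying,
\[
h(\kappa)=\frac{a+2a}{b\,(2a)^{2/3}}=\frac{3a}{b\,2^{2/3}a^{2/3}}=\frac{3\,a^{1/3}}{2^{2/3}\,b},
\]
which is the claimed critical value.

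I do not foresee any genuine obstacle; the lemma is essentially a checklist of direct computations. The only point that demands a moment's care is keeping the case distinctions in order (the implicit $a\neq 0$ needed for the pole to be non-removable and for $h'$ to vanish, and the non-degeneracy condition on $b$ required for the cube-root formula defining $\phi$ to make sense).
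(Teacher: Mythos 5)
Your proof is correct and follows essentially the same route as the paper's (the paper simply declares the proof elementary, and its omitted computation uses exactly your decomposition $h(x)=\tfrac{x}{b}+\tfrac{a}{bx^2}$, the cubic $(b-1)x^3=a$, and the derivative $h'(x)=\tfrac{x^3-2a}{bx^3}$). Note that your condition ``$b-1\neq 0$'' for the fixed point is the mathematically correct one, whereas the lemma as printed says $b\neq -1$, which appears to be a typo for $b\neq 1$.
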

The proof is elementary.

\noindent{\it Remark:} In our case, $b=-3$, we have 
$$h(\phi)=\phi=-\sqrt[3]{\frac{a}{4}}=-\sqrt[3]{\frac{a^3}{4a^2}}=h(\kappa).$$
This case also gives us the crucial property for the partition of $\R\setminus \{\phi\}$ into the intervals $N=(-\infty, \phi)$ and $P=(\phi,\infty)$. Namely, we have
$x\in P\setminus\{0\}$ iff $h(x)\in N$, and 
$x\in N$ iff $h(x)\in P$. For the next two propositions, we will assume $b=-3$ and $a\neq 0$, and hence $\phi\neq 0$.
\begin{prp}Let $b=-3$ and $a\neq 0$.
    If we define
    $$h^{(n)}:=\underbrace{h\circ h\circ \dots h}_{n \text{ times}}$$
    then, $y=\frac x{b^n}$ is an oblique asymptote of $h^{(n)}$.
    Furthermore, if $\kappa_n$ is a critical point of $h^{(n)}$, we have
    $$h^{(n)}(\kappa_n)=\phi =-\sqrt[3]{\frac{a}{4}}.$$
    Conversely, if $h^{(n)}(x)=\phi$, then either $x=\phi$ or $\frac{d}{dx}h^{(n)}(x)=0$.
\end{prp}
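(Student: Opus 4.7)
The plan is to address the three assertions separately, each relying on the decomposition $h(x)=\tfrac{x}{b}+\tfrac{a}{bx^2}$ and the chain rule. For the asymptote, I would induct on $n$, proving that
$$h^{(n)}(x)=\frac{x}{b^n}+O\!\left(\frac{1}{x^2}\right)\qquad\text{as }x\to\pm\infty.$$
The base case $n=1$ is the decomposition above. For the step, write $z:=h^{(n-1)}(x)=x/b^{n-1}+O(1/x^2)$ and substitute into $h(z)=z/b+a/(bz^2)$: the first summand is $x/b^n+O(1/x^2)$, while $z^2=(x/b^{n-1})^2(1+O(1/x^3))$ gives $a/(bz^2)=O(1/x^2)$. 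So $y=x/b^n$ is the claimed oblique asymptote.

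For the second assertion, the chain rule gives
$$(h^{(n)})'(x)=\prod_{k=0}^{n-1}h'\bigl(h^{(k)}(x)\bigr),\qquad h^{(0)}(x):=x.$$
If $\kappa_n$ is a critical point of $h^{(n)}$, some factor vanishes, so $h'(h^{(k)}(\kappa_n))=0$ for some $0\le k\le n-1$. By Lemma~\ref{lem:10}, $\kappa$ is the unique critical point of $h$, whence $h^{(k)}(\kappa_n)=\kappa$; one more application gives $h^{(k+1)}(\kappa_n)=h(\kappa)=\phi$ by the remark. Since $\phi$ is a fixed point of $h$, continuing to iterate up to $n$ preserves $\phi$, so $h^{(n)}(\kappa_n)=\phi$.

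For the converse, the essential computation is to factor the preimage equation $h(x)=\phi$. Clearing denominators yields $x^3+3\phi x^2+a=0$, and using $\phi^3=-a/4$ (which comes from $b=-3$ in the definition of $\phi$), one checks $x=\phi$ is a root. Polynomial division then produces the clean factorization
$$x^3+3\phi x^2+a=(x-\phi)(x+2\phi)^2.$$
Because $\kappa=\sqrt[3]{2a}=-2\phi$, the preimages of $\phi$ under $h$ are exactly $\phi$ (simple) and $\kappa$ (double); the double root at $\kappa$ encodes $h'(\kappa)=0$. Now I induct on $n$. The base $n=1$ is immediate from the factorization. For the step, assume $h^{(n)}(x)=\phi$ and set $z:=h^{(n-1)}(x)$; then $h(z)=\phi$, so $z\in\{\phi,\kappa\}$. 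If $z=\kappa$, the chain rule gives $(h^{(n)})'(x)=h'(\kappa)\cdot(h^{(n-1)})'(x)=0$. If $z=\phi$, the inductive hypothesis applied to $x$ and level $n-1$ yields $x=\phi$ or $(h^{(n-1)})'(x)=0$, and in the latter case the chain rule again delivers $(h^{(n)})'(x)=0$.

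The main obstacle is spotting the exact factorization $(x-\phi)(x+2\phi)^2$: this single algebraic identity does all the work, since the double root simultaneously identifies $\kappa$ as the only non-trivial preimage of $\phi$ and as a critical point of $h$. Once the factorization is in place, the rest is routine induction powered by the chain rule, and the special role of $b=-3$ (which forces $\kappa=-2\phi$ and $h(\kappa)=\phi$) is exactly what couples the critical-point structure to the orbit structure of $\phi$.
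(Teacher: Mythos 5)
Your proof is correct and follows essentially the same route as the paper: the same inductive asymptote argument, the same chain-rule product for $(h^{(n)})'$, and the same key factorization $h(x)=\phi\iff(x-\phi)(x-\kappa)^2=0$ (you write $\kappa=-2\phi$ explicitly) driving the induction for the converse.
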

\begin{proof}
    Since we already know the oblique asymptote of $h$ from Lemma~\ref{lem:10}, we can now inductively argue that
    \begin{align*}
        \lim_{x\to \pm\infty}\left|h^{(n+1)}(x)-\frac x{b^{n+1}}\right|&=\lim_{x\to \pm\infty}\left | \frac{a+\left(h^{(n)}(x)\right)^3}{b\left(h^{(n)}(x)\right)^2} - \frac{\frac{x}{b^n}}{b}\right|\\
        &=\lim_{x\to \pm\infty}\left| \frac a{b\left(h^{(n)}(x)\right)^2}+\frac{h^{(n)}(x)-\frac x{b^n}}{b}\right|=0
    \end{align*}
    using the fact that $\left(h^{(n)}(x)\right)^2\to\infty$ for $x\to \pm\infty$. Note that in this part did we did not need the assumption $b=-3$.
    
    For the next part, observe first that we have
 $$h(x)=\varphi \iff (x-\varphi) (x-\kappa)^2=0.$$
This equation has only two solutions, namely $x_1=\varphi$ and $x_2=\kappa$.
    Now, by chain rule, we obtain
    \begin{align}
        &\frac{d}{dx}h^{(n)}(x)=\prod_{r=0}^{n-1}h^\prime\left(h^{(r)}(x)\right)=0\label{ableitung}\\
        \iff &h^\prime\left(h^{(r)}(x)\right)=0 \text{ for some } r\in \{0,1,\dots ,n-1\}\notag\\
        \iff &h^{(r)}(x)= \kappa\text{ for some } r\in \{0,1,\dots ,n-1\}.\notag
    \end{align}
    So, it follows immediately that $\frac d{dx}h^{(n)}(x)=0$ implies
    $h^{(n)}(x)=\phi.$

Now we prove the converse, as stated in the lemma.    
For $n=0$, the statement is trivially true. Assume that for some $n\ge 1$ we have that $h^{(n)}(x)=\varphi$ implies that either $x=\varphi$ or $\frac d{dx}h^{(n)}(x)=0$.
Then we have for $h^{(n+1)}(x)=\varphi$ that $h^{(n)}(x)=\kappa$ or $h^{(n)}(x)=\varphi$.  On the other hand,
$\frac d{dx}h^{(n+1)}(x)=h'(h^{(n)}(x)) \frac d{dx}h^{(n)}(x)$. If $h^{(n)}(x)=\kappa$, then the first factor in this product is zero and the derivative of $\frac d{dx}h^{(n+1)}(x)$ vanishes.
If $h^{(n)}(x)=\varphi$, then, by induction, $x=\varphi$ or $\frac d{dx}h^{(n)}(x)=0$, and again the derivative of $\frac d{dx}h^{(n+1)}(x)$ is zero.
\end{proof}
\begin{prp}\label{number_of_critical_points}
    Let $\chi_n$ be the number of critical points of $h^{(n)}$. Then, the sequence $\{\chi_n\}$ is given by
    $$\chi_{2r+1}=2\times 3^r-1\;\;\text{ and }\;\; \chi_{2r}=3^r-1$$
    for all $r\ge 0$. This corresponds to OEIS A062318.
\end{prp}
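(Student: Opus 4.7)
The plan is to reduce the count of critical points of $h^{(n)}$ to counting real preimages of $\kappa$ under iterates of $h$, and then to exploit the two-branch structure of $h$.

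By the previous proposition, in particular formula~(\ref{ableitung}), $x$ is a critical point of $h^{(n)}$ iff $h^{(r)}(x)=\kappa$ for some $r\in\{0,1,\dots,n-1\}$. Set $S_r:=\{x\in\R:h^{(r)}(x)=\kappa\}$. A key first observation is that the sets $S_0,S_1,\dots$ are pairwise disjoint: if $x\in S_r$, then $h^{(r+1)}(x)=h(\kappa)=\varphi$, and since $\varphi$ is a fixed point of $h$ we have $h^{(r')}(x)=\varphi$ for every $r'>r$; as $a\neq0$ implies $\kappa\neq\varphi$, we conclude $x\notin S_{r'}$. Hence $\chi_n=\sum_{r=0}^{n-1}|S_r|$, and the problem becomes counting $|S_r|$.

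Next I would compute, for a given $y\in\R$, how many real solutions $x$ the equation $h(x)=y$ has; this is the cubic $x^3+3yx^2+a=0$. The Remark preceding the proposition supplies the decisive dichotomy: $h$ exchanges $N\setminus\{\varphi\}$ and $P\setminus\{\varphi,0\}$, and its only critical value equals $\varphi$, attained at $\kappa$. Since $\kappa$ lies in one of the two open half-lines (call it side $A$; it is $P$ when $a>0$ and $N$ when $a<0$), analyzing the two branches of $h$ (using its pole at $0$, the asymptote $y=-x/3$, and the critical point $\kappa$) gives: every $y$ on side $A$ with $y\neq\varphi$ has exactly one real preimage, lying on side $B$; every $y$ on side $B$ has exactly three real preimages, all lying on side $A$. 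The cleanest way to verify this is either a discriminant computation for $x^3+3yx^2+a$, which reduces to $a(4y^3+a)<0$, or a direct monotonicity analysis of the two branches.

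Now I iterate this backward starting from $\kappa\in$ side $A$: each application of $h^{-1}$ flips sides, multiplying cardinality by $1$ when going from side $A$ to side $B$ and by $3$ when going from side $B$ to side $A$. This yields $|S_0|=1$, $|S_1|=1$, $|S_2|=3$, $|S_3|=3$, and inductively
\[
|S_{2k}|=3^k\quad\text{(side $A$)},\qquad |S_{2k+1}|=3^k\quad\text{(side $B$)}.
\]
Substituting into $\chi_n=\sum_{r=0}^{n-1}|S_r|$ and using the geometric sum, one gets
\[
\chi_{2r}=\sum_{k=0}^{r-1}(|S_{2k}|+|S_{2k+1}|)=2\sum_{k=0}^{r-1}3^k=3^r-1,
\]
and $\chi_{2r+1}=\chi_{2r}+|S_{2r}|=(3^r-1)+3^r=2\cdot 3^r-1$, as claimed.

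The main obstacle is the preimage count itself: one must confirm that for $y$ on side $B$ the three real roots of $x^3+3yx^2+a=0$ are genuinely distinct (so that no critical points of $h^{(n)}$ are accidentally counted twice), and that the pairwise-disjointness argument for the $S_r$ remains valid across the alternation of sides. Both follow from $\kappa\neq\varphi$ together with the sign argument on the discriminant $a(4y^3+a)$ mentioned above; once these are in hand, the inductive count is straightforward.
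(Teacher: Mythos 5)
Your proof is correct and follows essentially the same route as the paper's: both reduce the count to the preimage sets $h^{-r}(\kappa)$ and use the discriminant dichotomy for $x^3+3yx^2+a=0$ to show that points on one side of $\varphi$ have three preimages on the other side while points on the other side have a single preimage, so the cardinalities alternate $1,1,3,3,9,9,\dots$ and the geometric sum gives $\chi_{2r}=3^r-1$, $\chi_{2r+1}=2\cdot 3^r-1$. Your explicit disjointness argument for the sets $S_r$ (via $h(\kappa)=\varphi$ and the fact that $\varphi$ is a fixed point with $\kappa\neq\varphi$) is a welcome detail that the paper's union $C_n=C_{n-1}\cup S_{n-1}$ leaves implicit.
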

\begin{proof}
    Without only carry out the case $\phi<0$. The proof for $\phi>0$ is essentially the same.

%
%
%

Let $N=(-\infty,\varphi)$ and $P=(\varphi, \infty)$. 
    Observe first that for given $y\in N$ the equation
    $$y=h(x)\;\;\text{ or equivalently }\;\; x^3+3yx^2-4\phi^3=0$$
    has three distinct real roots. Indeed, the discriminant
    $\Delta = 27\times 16\phi^3\left( y^3-\phi^3\right)$ is strictly positive for $y\in N$.
 Moreover, if $x,y\in N$ then the expression $x^3+3yx^2-4\phi^3$
    is strictly negative, hence the three solutions of $y=h(x)$ must lie in $P$. 
Hence, the preimage $h^{-1}(y)$ of a point in $y\in N$ has cardinality 3, and lies in $P$. 
Similarly, the preimage $h^{-1}(y)$ of a point in $y\in P$ has cardinality 1, and lies in $N$. 

Now, for $n=1$, the set of critical points of $h$ is $C_1=\{\kappa\}\subset P$.
Let $S_1:=h^{-1}(C_1)$, and $S_k:=h^{-1}(S_{k-1})$ for $k>1$.
For $n>1$ can read of from equation~(\ref{ableitung}) that the
the set of critical points of $h^{(n)}$ is the set $C_n=C_{n-1}\cup S_{n-1}$.
Observe that $S_n\subset N$ if $n$ is odd, and $S_n\subset P$ if $n$ is even.
Hence we have $\operatorname{card}C_{2n}=\operatorname{card}C_{2n}+3^{n-1}$
and $\operatorname{card}C_{2n+1}=\operatorname{card}C_{2n}+3^{n}$. This corresponds to the sequence OEIS A062318.
\end{proof}

\begin{prp}\label{number of fixed points}
    Let $\Phi_n$ be the number of fixed points of $h^{(n)}$. Then, the sequence $\{\Phi_n\}$ is given by
    $$\Phi_{2r+1}=1\;\;\text{ and }\;\;\Phi_{2r}=2\chi_{2r}-1=2\times 3^r-3$$
    for all $r\ge 0$.
\end{prp}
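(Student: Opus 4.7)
My plan splits the proposition along the parity of $n$, treating the odd case in one line and reducing the even case to a Markov-style lap count on one half-line.

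For $n=2r+1$ odd, I would use the remark following Lemma~\ref{lem:10}: $h$ interchanges $N=(-\infty,\varphi)$ and $P=(\varphi,\infty)$ (away from the pole at $0$). An odd composition $h^{(2r+1)}$ therefore still swaps $N$ and $P$, so every real fixed point must lie in $\overline N\cap\overline P=\{\varphi\}$. Since $\varphi$ is a fixed point of every iterate of $h$, this yields $\Phi_{2r+1}=1$.

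For $n=2r$ even, I set $g:=h^{(2)}$, which preserves $N$ and $P$. Because $h$ commutes with $h^{(2r)}$, $h$ induces a bijection $\mathrm{Fix}(g^{(r)})\cap N\to\mathrm{Fix}(g^{(r)})\cap P$; together with the fixed point $\varphi$, this gives $\Phi_{2r}=2|\mathrm{Fix}(g^{(r)})\cap N|+1$, so the task reduces to showing $|\mathrm{Fix}(g^{(r)})\cap N|=3^{r}-2$. I would then set up a Markov description of $g$ on the compactified half-line $\overline N=[-\infty,\varphi]$: the monotonicity analysis of $h$ shows that $h|_{N}$ is a decreasing bijection onto $P$ while $h|_{P}$ has three monotone branches on $(\varphi,0),(0,\kappa),(\kappa,\infty)$ each bijective onto $N$, so the preimages $x_{1},x_{2}\in N$ of $\kappa$ and $0$ under $h|_{N}$ subdivide $\overline N$ into three consecutive intervals $L_{1},L_{2},L_{3}$ on each of which $g$ is a homeomorphism $L_{i}\to\overline N$. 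Induction on $r$ (subdividing each lap of $g^{(r-1)}$ via the preimages of $x_{1},x_{2}$) then shows that $g^{(r)}$ has exactly $3^{r}$ full monotone laps $L\to\overline N$.

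With this structure, I would count fixed points by tracing the graph of $g^{(r)}$ against the diagonal $y=x$. By the previous proposition every critical value of $g^{(r)}$ equals $\varphi$, so the graph alternates local maxima at height $\varphi$ with local minima at $-\infty$ (attained at poles) as it runs from the corner $(-\infty,-\infty)$ to $(\varphi,\varphi)$. On each of the $3^{r}$ laps the function $g^{(r)}-\mathrm{id}$ changes sign across the endpoints, so the intermediate value theorem gives at least one crossing per lap; two of these are the boundary corners $x=-\infty$ (not a real point) and $x=\varphi$, leaving $3^{r}-2$ fixed points strictly inside $N$ and hence $\Phi_{2r}=2(3^{r}-2)+1=2\cdot 3^{r}-3$.

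The main obstacle is the uniqueness of the fixed point within each lap, since the intermediate value theorem only provides existence while the desired count requires exactness. I would discharge this either by an expansion estimate---showing $|(g^{(r)})'|>1$ at every interior fixed point so that each crossing of the diagonal is transverse and, given the alternating max/min structure, the only one on its lap---or by appealing to the classical theorem that a continuous interval map with $m$ full monotone laps is conjugate to the one-sided shift on $m$ symbols, whose periodic points of period $r$ number exactly $m^{r}$.
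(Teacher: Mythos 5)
Your argument is correct in outline and lands on the same counts as the paper, but it is organized differently. For odd $n$ you argue exactly as the paper does (odd iterates swap $N$ and $P$, so $\varphi$ is the only fixed point). For even $n$ the paper shifts $\varphi$ to the origin, works with $\tilde h^{(2r)}$ on all of $\R$, and reads off $4\cdot\frac{3^r-1}{2}-1$ diagonal crossings from the spike profile, using the oblique asymptote $y=x/3^{2r}$ to discard the two extreme branches. You instead use the commutation of $h$ with $h^{(2r)}$ to set up a bijection between the nontrivial fixed points in $N$ and those in $P$ (this bijection is correct: surjectivity follows by taking $x=h^{(2r-1)}(y)$), and then count only on $\overline N$ by exhibiting $g=h^{(2)}$ as a full $3$-branch Markov map of $\overline N$. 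This halves the bookkeeping and makes the origin of the factor $3^r$ more transparent; your identification of the three branches of $h|_P$ and their pullback to a partition of $N$ checks out, and the two discarded crossings (the corner at $-\infty$, controlled by the asymptote, and the trivial one at $\varphi$) reproduce the paper's "leftmost and rightmost branch" correction. One caveat: the step you flag yourself --- that each full monotone lap of $g^{(r)}$ meets the diagonal exactly once --- is genuinely needed for the increasing laps (a decreasing lap is automatic), and neither of your proposed fixes is carried out: an expansion bound $|(g^{(r)})'|>1$ is not verified, and the conjugacy of a full $m$-lap interval map to the one-sided shift does not by itself give an exact periodic-point count without additional hypotheses (the semiconjugacy need not be injective). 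The paper's own proof glosses over the same point by appealing to the schematic zigzag figure, so you are no worse off, but if you want a complete argument this is the place that still needs work.
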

\begin{proof}
    Since $h$ maps $N$ to $P$ and vice versa, the only fixed point of $h^{(2r+1)}$ is $\varphi$.

    For the fixed points of $h^{(2r)}$, we begin by assuming without loss of generality that $a>0$, $\phi<0$, and recalling that $h$ has a pole at $x=0$, 
    it is decreasing in the intervals $(-\infty,0)$, $(\kappa,\infty)$ and increasing in $(0,\kappa)$. 
    We also know that the only critical point of $h$ is at $\kappa$ and it has a local maximum there. So, for convenience, we define
    $$\tilde{h}(x):=h(x+\phi)-\phi$$
    {\ie}, we shift the point $(\phi,\phi)$ to the origin. The number of fixed points of $h^{(n)}$ is the
    same as the number of fixed points of $\tilde h^{(n)}$. Consider the sets $\tilde N=(-\infty, 0), \tilde P=(0,\infty)$, and
    $A=(0,\varphi), B=(\varphi,\varphi+\kappa), C=(\kappa,\infty)$. 
Then, $\tilde h$ maps $\tilde N$ bijectively to $\tilde P$, and $A,B$ and $C$ each bijectively to $\tilde N$.
Hence $\tilde h$ maps $\tilde N,\tilde P$ to  $\tilde P$ and tree copies of $\tilde N$. So after $2r$ iterations, the range of $\tilde h^{(2r)}$
consists of $3^r$ copies of $\tilde N$ and $3^r$ copies of $\tilde P$.
Figure~\ref{fig-zigzag} shows schematically the behaviour of $\tilde h^{(2r)}$.
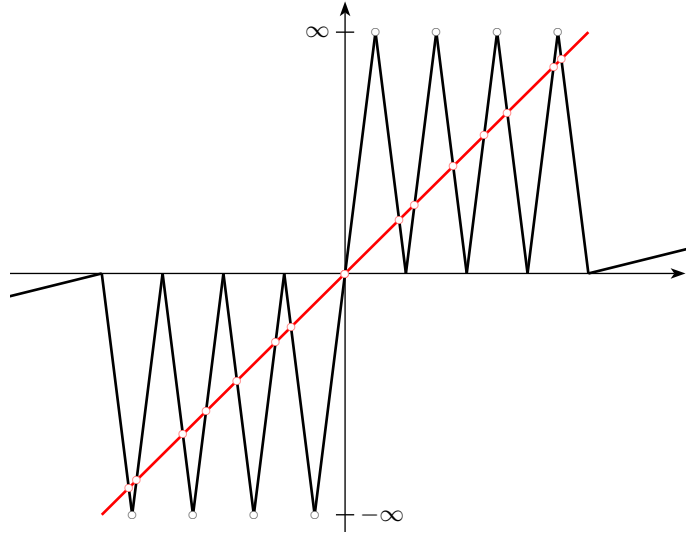
\begin{figure}[ht]
\begin{center}
\psset{xunit=.4cm,yunit=.4cm,algebraic=true,dimen=middle,dotstyle=o,dotsize=5pt 0,linewidth=1.6pt,arrowsize=3pt 2,arrowinset=0.25}
\begin{pspicture*}(-11.00542704053232,-8.55075404872461)(11.195258877733565,9.021217430983894)

\psline[linewidth=.5pt, arrows=->](-12,0)(11.195,0)
\psline[linewidth=.5pt, arrows=->](0,-10)(0,9.02)

\psline[linewidth=1pt](0.,0.)(1.,8.)
\psline[linewidth=1pt](1.,8.)(2.,0.)
\psline[linewidth=1pt](2.,0.)(3.,8.)
\psline[linewidth=1pt](3.,8.)(4.,0.)
\psline[linewidth=1pt](4.,0.)(5.,8.)
\psline[linewidth=1pt](5.,8.)(6.,0.)
\psline[linewidth=1pt](6.,0.)(7.,8.)
\psline[linewidth=1pt](7.,8.)(8.,0.)
\psline[linewidth=1pt](8.,0.)(16.,2.)

\psdots[dotsize=3pt 0](1,8)
\psdots[dotsize=3pt 0](3,8)
\psdots[dotsize=3pt 0](5,8)
\psdots[dotsize=3pt 0](7,8)


\psline[linewidth=1pt](0.,0.)(-1.,-8.)
\psline[linewidth=1pt](-1.,-8.)(-2.,0.)
\psline[linewidth=1pt](-2.,0.)(-3.,-8.)
\psline[linewidth=1pt](-3.,-8.)(-4.,0.)
\psline[linewidth=1pt](-4.,0.)(-5.,-8.)
\psline[linewidth=1pt](-5.,-8.)(-6.,0.)
\psline[linewidth=1pt](-6.,0.)(-7.,-8.)
\psline[linewidth=1pt](-7.,-8.)(-8.,0.)
\psline[linewidth=1pt](-8.,0.)(-16.,-2.)

\psline[linewidth=.5pt](-.3,8)(.3,8)
\rput[r](-.5,8){\small{$\infty$}}

\psline[linewidth=.5pt](-.3,-8)(.3,-8)
\rput[l](.5,-8){\small{$-\infty$}}

\psdots[dotsize=3pt 0](-1,-8)
\psdots[dotsize=3pt 0](-3,-8)
\psdots[dotsize=3pt 0](-5,-8)
\psdots[dotsize=3pt 0](-7,-8)

\psplot[linewidth=1pt,linecolor=red]{-8}{8}{(-0.-1.*x)/-1.}
\begin{scriptsize}
\psdots[dotsize=3pt 0,linecolor=red](7.111111111111111,7.111111111111111)
\psdots[dotsize=3pt 0,linecolor=red](6.857142857142857,6.857142857142857)
\psdots[dotsize=3pt 0,linecolor=red](5.333333333333333,5.333333333333333)
\psdots[dotsize=3pt 0,linecolor=red](4.571428571428571,4.571428571428571)
\psdots[dotsize=3pt 0,linecolor=red](3.5555555555555554,3.5555555555555554)
\psdots[dotsize=3pt 0,linecolor=red](2.2857142857142856,2.2857142857142856)
\psdots[dotsize=3pt 0,linecolor=red](1.7777777777777777,1.7777777777777777)
\psdots[dotsize=3pt 0,linecolor=red](0.,0.)
\psdots[dotsize=3pt 0,linecolor=red](0.,0.)
\psdots[dotsize=3pt 0,linecolor=red](-1.7777777777777777,-1.7777777777777777)
\psdots[dotsize=3pt 0,linecolor=red](-2.2857142857142856,-2.2857142857142856)
\psdots[dotsize=3pt 0,linecolor=red](-3.5555555555555554,-3.5555555555555554)
\psdots[dotsize=3pt 0,linecolor=red](-4.571428571428571,-4.571428571428571)
\psdots[dotsize=3pt 0,linecolor=red](-5.333333333333333,-5.333333333333333)
\psdots[dotsize=3pt 0,linecolor=red](-6.857142857142857,-6.857142857142857)
\psdots[dotsize=3pt 0,linecolor=red](-7.111111111111111,-7.111111111111111)
\end{scriptsize}
\end{pspicture*}
\caption{Schematic profile of the function $\tilde h^{(2r)}$: We have $(3^r-1)/2$ spikes on the positive $x$-axis and $(3^r-1)/2$ on the
negative $x$-axis.}\label{fig-zigzag}
\end{center}
\end{figure}

Observe that the oblique asymptote of $\tilde h^{(2r)}$ is given by $y=x/3^{2r}$, hence the line $y=x$ does not intersect the leftmost and the rightmost branch of the graph of $\tilde h^{(2r)}$.
Hence the number of fixed points of $\tilde h^{(2r)}$ is $4\cdot\frac{3^r-1}2-1=2\times 3^r-3$.
    \end{proof}


Analogous to the determination of the number of fixed points of $h^{(n)}$, the number of zeros is now calculated.
\begin{prp}\label{number of zeros}
    Let $\rho_n$ be the number of zeros of $h^{(n)}$. Then, the sequence $\{\rho_n\}$ is given by
    $$\rho_{2r}=\rho_{2r+1}=3^r$$
    for all $r\ge 0$.
\end{prp}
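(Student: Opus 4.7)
The plan is to adapt the approach of the previous proposition by tracking the iterated preimages of the unique real zero of $h$. Since $h(x)=(a+x^3)/(-3x^2)$ vanishes only at $z_0:=-\sqrt[3]{a}$, for $n\ge 1$ we have
$$\rho_n=|A_{n-1}|\quad\text{where}\quad A_k:=\{x\in\R : h^{(k)}(x)=z_0\},$$
and these satisfy the recursion $A_k=h^{-1}(A_{k-1})$ with $A_0=\{z_0\}$. The case $n=0$ is trivial since $h^{(0)}$ is the identity.

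First I locate $z_0$ with respect to the partition $\{N,P\}$. Comparing $|z_0|=\sqrt[3]{|a|}$ with $|\varphi|=\sqrt[3]{|a|/4}$ and using that the signs of $z_0$ and $\varphi$ agree, one sees that $z_0\in N$ when $a>0$ (so $\varphi<0$) and $z_0\in P$ when $a<0$ (so $\varphi>0$). In either case $z_0$ sits on the side on which $h$ is three-to-one: $h(x)=y$ is equivalent to $p_y(x):=x^3+3yx^2+a=0$, whose critical points are $x=0$ and $x=-2y$, with values $p_y(0)=a$ and $p_y(-2y)=a+4y^3$. The standard sign analysis of these extremal values shows that $p_y$ has three distinct real roots exactly when $y$ lies on the same side of $\varphi$ as $z_0$, and a single real root otherwise.

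Denote that ``three-preimage side'' by $X$ and the opposite side by $Y$. Since $h$ maps $X\setminus\{0\}$ into $Y$ and $Y$ into $X$, preimages of elements of $X$ lie in $Y$ (three each) while preimages of elements of $Y$ lie in $X$ (one each). Starting from $A_0=\{z_0\}\subset X$, a one-step induction on $k$ yields $A_{2k}\subset X$ with $|A_{2k}|=3^k$, and $A_{2k+1}\subset Y$ with $|A_{2k+1}|=3^{k+1}$. Translating back via $\rho_n=|A_{n-1}|$ gives $\rho_{2r+1}=|A_{2r}|=3^r$ and $\rho_{2r}=|A_{2r-1}|=3^r$, which is the claim.

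The main technical point is guaranteeing that the preimages stay distinct at each step. A three-element fibre of $h$ collapses only when $y$ coincides with the critical value of $h$, which by the remark after Lemma~\ref{lem:10} equals $\varphi$. But $z_0\ne\varphi$, and since $\varphi$ is a fixed point of $h$, it cannot appear in any preimage chain originating at $z_0$; likewise $x=0$ never arises as a preimage, because substituting $x=0$ into $p_y$ forces $a=0$. Hence no collapses occur and the count is exact.
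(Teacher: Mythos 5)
Your argument is correct, and it reaches the statement by a genuinely different route than the paper. The paper reduces the zero count of $h^{(n)}$ to counting solutions of $\tilde h^{(n)}=-\varphi$ for the shifted function $\tilde h(x)=h(x+\varphi)-\varphi$, and then reads the answer off the schematic spike profile of Figure~\ref{fig-zigzag}: each of the $(3^r-1)/2$ spikes on the relevant half-axis meets the horizontal line $y=-\varphi$ twice and the outermost branch contributes one more intersection, giving $2\cdot\frac{3^r-1}{2}+1=3^r$, with an analogous count in the odd case. You instead count backward orbits of the unique zero $z_0=-\sqrt[3]{a}$ of $h$, using precisely the fibre-cardinality dichotomy that the paper establishes in the proof of Proposition~\ref{number_of_critical_points} (three distinct preimages for points on one side of $\varphi$, one for points on the other, with the two sides alternating under $h$); so your proof is in the spirit of the paper's critical-point count rather than its graph-based fixed-point count. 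What your route buys is rigor: it avoids any appeal to the schematic graph, it explicitly handles the two potential degeneracies (a triple fibre can collapse only over $y=\varphi$, which is a fixed point and therefore never appears in a backward orbit of $z_0\neq\varphi$; and $x=0$ never occurs as a preimage since $a\neq 0$), and it works for either sign of $a$, whereas the paper restricts to $\varphi<0$. The only blemish is notational: since the pole $0$ lies in the one-preimage side $Y$ rather than in $X$, the mapping statement should read that $h$ maps $X$ into $Y$ and $Y\setminus\{0\}$ into $X$; this does not affect your count because, as you observe, $0$ is never a preimage of any point in the backward orbit of $z_0$.
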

\begin{proof}
The number of zeros of $h^{(n)}$ equals the number of solutions of the equation $\tilde h^{(n)}=-\varphi$.
We continue to assume that $\varphi$ is negative. If $n=2r$ is even, then each spike on the positive $x$-axis
contributes  two solutions, and the rightmost branch another one. So, the number of solutions
of $\tilde h^{(2r)}=-\varphi$ is $2\cdot\frac{3^r-1}2+1=3^r$, as claimed.

Similarly, for $n=2r+1$, the function $\tilde h^{(2r+1)}$ exhibits $(3^r-1)/2$ spikes in the second quadrant, is negative 
on the positive $x$-axis, and approaches the asymptote $-x/3^{2r+1}$. Therefore, we also find $2\cdot\frac{3^r-1}2+1=3^r$ 
zeros of $\tilde h^{(2r+1)}$.
%
%
%
%
%
\end{proof}
\section{Loops and chains of Hesse derivatives}\label{sec:loops}
We return to considering the curves in Hesse form, {\ie},
\begin{equation}\G c :\ x^3+y^3+z^3+c\,xyz=0, \qquad \G\infty: \ xyz=0\label{hess}\end{equation}
for $c\in \R$. So, in this section, while referring to the function $h$ from the previous section, we will assume $a=108$, $b=-3$, $\phi=-3$, $\kappa=6$
(see Lemma~\ref{c0} and Lemma~\ref{lem:10}).

We will begin with the following observation.
\begin{lem}\label{components}
    The geometric interpretation of the property $x\in P\iff h(x)\in N$ is that the curve $\G{c}$ has two components when $x<-3$ and only one component when $x>-3$.
    In particular, if $\Gamma$ has one component, then $\h\Gamma$ has two components, and if $\Gamma$ has two component, then $\h\Gamma$ has one components.
\end{lem}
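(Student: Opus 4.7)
The plan is to establish the sharper statement ``$\G c$ has two real components iff $c<-3$, and one component iff $c>-3$,'' from which the $\h$-swap follows immediately via Lemma~\ref{c0} together with the already observed $N\leftrightarrow P$ swap of $h$.

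First I would verify that the real Hesse pencil $\G c$ is nonsingular for every $c\in\R\setminus\{-3\}$. The conditions $3x^2+cyz=3y^2+cxz=3z^2+cxy=0$ force, upon multiplication, $27\,x^2y^2z^2=-c^3\,x^2y^2z^2$, so either the trivial non-projective solution $x=y=z=0$ or $c^3=-27$; the only real root is $c=-3$, which produces the singular point $[1:1:1]$. Hence on each of the open intervals $(-\infty,-3)$ and $(-3,+\infty)$ the family $\G c$ consists of smooth real projective cubics, and the number of real connected components is locally (hence on each interval) constant in $c$ by the implicit function theorem.

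Next, a local bifurcation analysis at $c=-3$ pins down the two counts. Using $x^3+y^3+z^3-3xyz=(x+y+z)(x^2+y^2+z^2-xy-yz-zx)$, in the affine chart $z=1$ the curve $\G{-3}$ decomposes as the line $x+y+1=0$ together with the isolated real point $(1,1)$ where the positive-definite form $x^2-xy+y^2-x-y+1$ vanishes. Substituting $u=x-1$, $v=y-1$ and $c=-3+\varepsilon$ turns the defining polynomial into
\begin{equation*}
3(u^2-uv+v^2)+u^3+v^3+\varepsilon(1+u)(1+v).
\end{equation*}
Since $u^2-uv+v^2$ is positive definite (discriminant $1-4<0$), near $(1,1)$ the equation reads to leading order $\varepsilon+3(u^2-uv+v^2)\approx 0$: for $\varepsilon>0$ there are no local solutions and the isolated point disappears, whereas for $\varepsilon<0$ the level set is a small oval. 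Meanwhile the linear factor perturbs smoothly into a single projective branch. Hence $\G c$ has two components for $c$ slightly below $-3$ and one for $c$ slightly above $-3$, and by local constancy this extends to the full intervals.

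Finally, Lemma~\ref{c0} gives $\h\G c=\G{h(c)}$, and combined with the equivalence $c\in P\iff h(c)\in N$ noted after Lemma~\ref{lem:10}, the swap of component counts follows. I expect the main obstacle to be making the local bifurcation at $(1,1)$ fully rigorous: the cubic correction $u^3+v^3$ and the $\varepsilon$-linear piece must not destroy the leading-order geometry. This can be handled by rescaling $(u,v)\mapsto\sqrt{|\varepsilon|}\,(u,v)$ and applying the implicit function theorem on a compact neighborhood of $[1:1:1]$ disjoint from the deformed line.
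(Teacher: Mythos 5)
Your argument is correct in substance, but it takes a genuinely different route from the paper. The paper exploits the symmetry of $\G{c}$ with respect to the line $y=x$: intersecting with that axis gives $2x^3+cx^2+1=0$, whose discriminant $-108-4c^3$ is positive exactly for $c<-3$, so the axis meets the curve in three real points iff $c<-3$; combined with Harnack's bound (a smooth real cubic has one pseudoline and at most one oval) and the fact that a symmetric oval must meet the symmetry axis, this yields the component count in one short computation. You instead argue by deformation: you verify that the real Hesse pencil is smooth for all $c\neq-3$, invoke local topological triviality of the smooth family to get constancy of the component count on $(-\infty,-3)$ and $(-3,\infty)$, and then read off the two counts from the bifurcation at the degenerate member $\G{-3}=\{x+y+z=0\}\cup\{[1:1:1]\}$, where your expansion $3(u^2-uv+v^2)+u^3+v^3+\varepsilon(1+u)(1+v)$ correctly shows an oval being born for $\varepsilon<0$ and an isolated point disappearing for $\varepsilon>0$. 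Two points deserve to be made explicit if you write this up: (i) ``locally constant by the implicit function theorem'' really needs an Ehresmann-type fibration argument (IFT plus compactness of the fibers) to conclude that nearby fibers are diffeomorphic; and (ii) you must rule out real points of $\G{c}$ far from both the line and $[1:1:1]$ for $c$ near $-3$, which follows from compactness of $\rpp$ and continuity of $(c,X)\mapsto f_c(X)$. With those additions your proof is complete; it is longer than the paper's but makes visible exactly which topological event occurs at $c=-3$, and it does not rely on the $y=x$ symmetry or on the unstated Harnack-type input that the paper's intersection count implicitly uses.
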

\begin{proof}
    We begin by noting that $\G c$ is unchanged under the transformation $(x,y)\mapsto (y,x)$, and hence it is symmetric with respect to the line $y=x$.
    Now, let us calculate the number of intersection points of $\G{c}$ with the line $y=x$. This gives us the equation
    $$2x^3+cx^2+1=0$$
    the discriminant of which is
    $$-27\times 4-4c^3$$
    and hence the equation has three real roots iff $c<-3$.
\end{proof}
The operator $\h$ defines via $\Gamma\mapsto \h\Gamma$ an iterative discrete dynamical system on the set of the cubic curves~(\ref{hess}) in Hesse form.
The dynamics is given by  Lemma~\ref{c0}. The system has exactly two fixed points, namely $\G{-3}$ and $\G\infty$. We are now interested in
orbits of a given length which end in one of the fixed points, and in closed orbits of a given length. The former we call {\em Hesse chains}, the latter {\em Hesse loops}.
So, a Hesse chain is given by
$$\h^n\G{c_0}=\G{c_n}=\G{-3}\qquad\text{or}\qquad\h^n\G{c_0}=\G{c_n}=\G{\infty}$$
where we call the minimal $n$ with this property the length of the chain.
Similarly, a Hesse loop is given by
$$\h^n\G{c_0}=\G{c_n}=\G{c_0}$$
where the minimal $n>0$ with this property is the length of the loop.

The number of Hesse chains ending in $\G{-3}$ of length $n$ is easy to calculate as shown in the following Lemma.
\begin{lem}
    If $\ch_n^{(-3)}$ denotes the number\,\footnote{We again find ourselves at a loss of expressions to notate the ``ch" sound used in ``chain" as such an alphabet is not present in English, Latin or Greek script. We will use this excuse to use another Bengali alphabet, namely $\footnotech$, pronounced as \textit{``chaw"}.}  of Hesse chains ending in $\G{-3}$ of length $n$, then
    $$\ch_{2r}=\ch_{2r-1}=3^{r-1}$$
    for $r\ge 1$.
\end{lem}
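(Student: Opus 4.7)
The plan is to count preimages of $\varphi=-3$ under the iterates $h^{(n)}$ and then subtract to isolate those whose minimal preimage index is exactly $n$. By Lemma~\ref{c0}, the condition $\h^n\G{c_0}=\G{-3}$ is equivalent to $h^{(n)}(c_0)=-3=\varphi$, and the chain has length exactly $n$ iff $n$ is the smallest index with this property.

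First, I would let $f_n:=\#\{x\in\R : h^{(n)}(x)=\varphi\}$, with the convention $f_0=1$. Since $h(\varphi)=\varphi$, every solution of $h^{(n-1)}(x)=\varphi$ is automatically a solution of $h^{(n)}(x)=\varphi$, and therefore the number of chains of length \emph{exactly} $n$ satisfies
$$\ch_n^{(-3)} = f_n - f_{n-1}.$$

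Next, I would invoke the proposition established earlier in the section, which says that $h^{(n)}(x)=\varphi$ holds iff $x=\varphi$ or $x$ is a critical point of $h^{(n)}$. This gives $f_n \le \chi_n+1$, with equality provided $\varphi$ itself is not a critical point of $h^{(n)}$. With $a=108$ and $b=-3$ one computes $h'(\varphi)=-3\neq 0$, and by the chain rule $\frac{d}{dx}h^{(n)}(x)\big|_{x=\varphi}=h'(\varphi)^n=(-3)^n\neq 0$, so indeed $\varphi$ is never a critical point of any iterate. Hence $f_n=\chi_n+1$.

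Finally, substituting the formulas from Proposition~\ref{number_of_critical_points}, namely $\chi_{2r}=3^r-1$ and $\chi_{2r-1}=2\cdot 3^{r-1}-1$, and recalling $f_0=1$, a short arithmetic gives
$$\ch_{2r}^{(-3)}=3^r-2\cdot 3^{r-1}=3^{r-1}, \qquad \ch_{2r-1}^{(-3)}=2\cdot 3^{r-1}-3^{r-1}=3^{r-1}.$$
The only delicate point is verifying that $\varphi$ itself lies outside every $C_n$, which ensures that the counts $f_n$ and $\chi_n$ differ by exactly one; the rest is bookkeeping from results already established.
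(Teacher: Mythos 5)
Your proof is correct and follows essentially the same route as the paper, which also obtains $\ch_n^{(-3)}=\chi_n-\chi_{n-1}$ from Proposition~\ref{number_of_critical_points} and then substitutes the closed forms. Your version is in fact more careful than the paper's one-line argument: the intermediate count $f_n=\chi_n+1$ and the verification that $\varphi$ is never a critical point (via $h'(\varphi)=-3$ and the chain rule) are exactly the details needed to justify the paper's ``it is clear that''.
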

\begin{proof}
    From Proposition~\ref{number_of_critical_points}, it is clear that
    $$\ch_{2r}^{(-3)}=\chi_{2r}-\chi_{2r-1}=3^{r-1}$$
    and
    $$\ch_{2r-1}^{(-3)}=\chi_{2r-1}-\chi_{2r-2}=3^{r-1}$$
    hence completing the proof.
\end{proof}


\begin{lem}
    For any positive $B\in \R$, there exists $c>B$ and $c<-B$ and an $n\in \N$ such that $\h^n\G{c}=\G{-3}$.
\end{lem}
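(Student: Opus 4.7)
I will construct an explicit backward trajectory of the fixed point $-3$ under $h$ whose magnitudes grow geometrically to infinity. Because the backward dynamics alternates between $N=(-\infty,-3)$ and $P=(-3,\infty)$, this single trajectory automatically contains preimages of $-3$ that are arbitrarily large positive and arbitrarily large negative, supplying both required values of $c$ at once.

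Recall from Lemma~\ref{c0} that the equation $h(x)=y$ is the cubic $x^3+3yx^2+108=0$, which can be rewritten as
\[ x \;=\; -3y\;-\;\frac{108}{x^2}. \]
Set $c_0=-3$ and $c_1=6$, noting $h(6)=-3$. For $k\ge 1$, define $c_{k+1}$ to be the large-branch preimage of $c_k$: if $c_k\in N$, take the root of the cubic lying in $(6,\infty)$ (which exists and is unique there by the monotonicity analysis of $h$ used in the proof of Proposition~\ref{number of fixed points}); if $c_k\in P$, take the unique real preimage, which necessarily lies in $N$. An easy induction then shows that $c_k\ge 6$ for all odd $k\ge 1$ and $c_k<-3$ for all even $k\ge 2$.

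From the displayed identity together with this sign information, one obtains in both cases the uniform estimate
\[ |c_{k+1}|\;\ge\;3|c_k|-3, \]
using that $c_{k+1}\in(6,\infty)$ in Case $N\to P$ (so $108/c_{k+1}^2<3$) and the clean sign cancellation in Case $P\to N$ (where $-3c_k$ and $-108/c_{k+1}^2$ have the same sign, giving $|c_{k+1}|>3|c_k|$ directly). Iterating the recurrence $a_{k+1}\ge 3a_k-3$ starting from $|c_1|=6$ yields $|c_k|\ge\tfrac{9}{2}\cdot 3^{k-1}+\tfrac{3}{2}\to\infty$. Given any $B>0$, a sufficiently large odd $k$ gives $c_k>B$, and a sufficiently large even $k$ gives $c_k<-B$; in each case $\h^k\G{c_k}=\G{c_0}=\G{-3}$ by construction, which is exactly the statement of the lemma. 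The only mildly delicate point is the uniform estimate above, but it is elementary once the two sign cases have been separated.
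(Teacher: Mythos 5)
Your proof is correct and follows essentially the same route as the paper: backward iteration from the fixed point $-3$ through its preimage $6$, always selecting the largest-magnitude preimage and showing that $|c_k|$ roughly triples (the paper's version of your estimate is $\bar c<-3c$ for $c\ge 6$ and $\bar c>-3c-1$ for $c\le -6$). Your write-up actually supplies more detail than the paper's, in particular the explicit recurrence $|c_{k+1}|\ge 3|c_k|-3$ and its iteration.
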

\begin{proof}
   Observe first that $\G{6}=\G{-3}$. Also note that for $c\ge 6$ the solution $\bar c$ of the equation $h(\bar c) =c$ satisfies $\bar c<-3c$.
   On the other hand, for $ c\le -6$, the largest of the tree solutions $\bar c$ of the equation $h(\bar c) =c$ satisfies $\bar c>-3c-1$.
   Hence by backward iteration and choosing always the solution with the largest absolute value, we can construct an orbit ending in $\G{-3}$ and
   starting at some $\G{c}$ with $c>B$ or  $c<-B$.
   \end{proof}

Similarly as before, we consider the Hesse chains ending in $\Gamma_{\infty}$. 
\begin{lem}
    If $\ch_n^\infty$ denotes the number of Hesse chains of
    length $n$ ending in $\G\infty$, then 
    $$\ch_{2r}^\infty=\ch_{2r-1}^\infty=3^{r-1}$$
    for $r\ge 1$.
\end{lem}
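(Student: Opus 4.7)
The plan is to reduce the enumeration of Hesse chains ending at $\G\infty$ to counting real zeros of the iterates of the function $h(x)=-(108+x^3)/(3x^2)$ from Section~5, and then invoke Proposition~\ref{number of zeros}. The key observation is that $\G\infty$ is an absorbing fixed point of the discrete dynamical system (by Lemma~\ref{c0}) and that the only curve $\G{c}$ with $c\in\R$ that is mapped to $\G\infty$ in a single step is $\G 0$, since $h$ has its unique pole at $c=0$, so $h(c)=\infty$ forces $c=0$.

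First I would translate a Hesse chain of minimal length $n$ ending at $\G\infty$ into the condition $h^{(n-1)}(c_0)=0$ on the starting parameter $c_0$. Minimality requires $h^{(j)}(c_0)\neq 0$ for every $j<n-1$, but this is automatic: if $h^{(j)}(c_0)=0$ for some $j<n-1$, then $h^{(j+1)}(c_0)=\infty$ and the orbit is trapped at $\infty$ from that point on, contradicting $h^{(n-1)}(c_0)=0$. Hence sending a chain to its starting value is a bijection between length-$n$ Hesse chains terminating at $\G\infty$ and real zeros of $h^{(n-1)}$, which yields $\ch_n^\infty=\rho_{n-1}$.

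Finally, from Proposition~\ref{number of zeros} we have $\rho_{2k}=\rho_{2k+1}=3^k$ for $k\ge 0$. Setting $k=r-1$ gives
$$\ch_{2r-1}^\infty=\rho_{2r-2}=3^{r-1}\qquad\text{and}\qquad \ch_{2r}^\infty=\rho_{2r-1}=3^{r-1}$$
for all $r\ge 1$, as claimed. There is no substantial obstacle: the entire argument is a direct reindexing of a zero count already established, together with the structural observation that $\G 0$ is the unique preimage of $\G\infty$ under $\h$.
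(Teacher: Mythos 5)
Your proposal is correct and follows essentially the same route as the paper: both reduce the count of length-$n$ chains ending in $\G\infty$ to the number of zeros of $h^{(n-1)}$ via the observation that $\G{0}$ is the unique preimage of $\G\infty$, and then invoke the zero-count $\rho_{2k}=\rho_{2k+1}=3^k$. Your additional remark that minimality of the chain length is automatic (since the orbit is absorbed at $\G\infty$ once it reaches $\G 0$) is a worthwhile detail the paper leaves implicit.
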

\begin{proof}
Since $\h\G{0}=\G\infty$, the number of Hesse chains ending in $\G\infty$ of length $n$ is the number of zeros of $h^{(n-1)}$.
Therefore the claim follows from Lemma~\ref{number of zeros}.
\end{proof}
%
Now we turn our attention towards Hesse loops.
\begin{prp}
    The only Hesse loop of odd length is the trivial loop $\h\G{-3}=\G{-3}$.
\end{prp}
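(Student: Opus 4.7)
The plan is to exploit the alternating behavior of the iteration map $h$ between the intervals $N=(-\infty,-3)$ and $P=(-3,\infty)$. Recall from Section~5 that for the relevant parameters $a=108$, $b=-3$ we have $\varphi=-3$, and $h$ satisfies the crucial swap property established in the remark after Lemma~\ref{lem:10}: $x\in N\iff h(x)\in P$ and $x\in P\setminus\{0\}\iff h(x)\in N$. This is the only ingredient needed.

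Suppose $\h^n\G{c_0}=\G{c_0}$ for some odd $n\geq 1$, and write $c_i$ for the iterates, so $c_{i+1}=h(c_i)$ and $c_n=c_0$. First I would dispose of the case where some $c_i$ equals $0$ or $\infty$: since $h(0)=\infty$ and $\G{\infty}$ is a fixed point of $\h$, any orbit that hits either value remains at $\infty$ forever and can never return to a finite $c_0\neq\infty$, so it cannot close up into a loop with starting value in $\R$. Hence for a genuine Hesse loop with $c_0\in\R$ all iterates $c_i$ lie in $\R\setminus\{0\}$.

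Now I would apply the parity argument: by the swap property each consecutive pair $(c_i,c_{i+1})$ lies in opposite intervals among $N$ and $P$. Consequently, if $c_0\in N$ then $c_i\in N$ for even $i$ and $c_i\in P$ for odd $i$, and symmetrically if $c_0\in P\setminus\{0\}$. Since $n$ is odd, $c_n$ lies in the interval opposite to $c_0$, contradicting $c_n=c_0$ unless $c_0$ does not strictly lie in either interval, i.e.\ unless $c_0=\varphi=-3$. This leaves only the trivial loop $\h\G{-3}=\G{-3}$. There is no real obstacle; once the partition property from Section~5 is invoked, everything follows from a one-line parity observation.
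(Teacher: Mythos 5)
Your proof is correct and follows essentially the same route as the paper: the paper's own argument invokes Lemma~\ref{components}, which is precisely the geometric restatement of the swap property $x\in N\iff h(x)\in P$ that you use, and the conclusion is the same one-line parity observation on the partition $\R\setminus\{-3\}=N\cup P$. Your explicit handling of orbits passing through $c=0$ (which fall into the fixed point $\G{\infty}$ and cannot close up) is a small extra care the paper leaves implicit.
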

\begin{proof}
  This follows immediately from Lemma~\ref{components}.
\end{proof}
%
Now, we want to determine the number of Hesse loops of length $n$ for even $n$. We start with the following observation.
\begin{prp}
    For every even $n$, there is at least one Hesse loop of length $n$. 
\end{prp}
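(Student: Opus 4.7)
The plan is to translate the problem into counting periodic orbits of the real scalar map $h(x) = -(108 + x^3)/(3 x^2)$ studied in the previous section. A Hesse loop of length exactly $n$ starting at $\G{c_0}$ corresponds precisely to a real number $c_0$ satisfying $h^{(n)}(c_0) = c_0$ but $h^{(d)}(c_0) \neq c_0$ for every proper divisor $d$ of $n$. Denoting by $F_m := \{x \in \R : h^{(m)}(x) = x\}$ the set of real fixed points of $h^{(m)}$, which by Proposition~\ref{number of fixed points} has cardinality $\Phi_m$, the number of real points of exact period $n$ is therefore bounded below by
$$\Phi_n \;-\; \sum_{d \mid n,\, d < n} \Phi_d,$$
and it suffices to show that this quantity is strictly positive whenever $n = 2r$ is even.

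For this estimate, every proper divisor of $2r$ is at most $r$, so using the formulas $\Phi_{2s} = 2 \cdot 3^s - 3$ and $\Phi_{2s+1} = 1$ one obtains
$$\sum_{d \mid 2r,\, d < 2r} \Phi_d \;\le\; \sum_{d=1}^{r} \Phi_d \;\le\; r + \sum_{s=1}^{\lfloor r/2 \rfloor} 2 \cdot 3^s \;\le\; r + 3^{\lfloor r/2 \rfloor + 1} \;\le\; r + 3^r,$$
where the last inequality uses $\lfloor r/2 \rfloor + 1 \le r$ for $r \ge 2$. Since $\Phi_{2r} = 2 \cdot 3^r - 3$, this gives
$$\Phi_{2r} - \sum_{d \mid 2r,\, d < 2r} \Phi_d \;\ge\; 3^r - r - 3 \;>\; 0 \quad \text{for } r \ge 2.$$
The remaining case $r = 1$ is immediate: $\Phi_2 = 3$ and the only proper divisor of $2$ is $1$ with $\Phi_1 = 1$, so the gap is $2$.

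Picking any $c_0$ in $F_{2r} \setminus \bigcup_{d \mid 2r,\, d < 2r} F_d$ and applying Lemma~\ref{c0} iteratively yields a loop $\G{c_0} \to \h\G{c_0} \to \cdots \to \h^{2r}\G{c_0} = \G{c_0}$ of minimal length $2r$, proving the proposition. The only step that requires any care is the inequality $\Phi_{2r} > \sum_{d \mid 2r,\, d < 2r} \Phi_d$; fortunately the exponent gap between the dominant $3^r$ in $\Phi_{2r}$ and the essentially $3^{r/2}$-type contribution from proper-divisor sums makes the comparison routine, with only the tiny case $r = 1$ to be checked by hand.
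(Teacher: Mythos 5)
Your proof is correct and follows essentially the same route as the paper's: both arguments lower-bound the number of points of exact period $2r$ by $\Phi_{2r}$ minus the fixed-point counts of lower iterates of $h$, and then use the explicit formula $\Phi_{2s}=2\cdot 3^s-3$ to see that the dominant term $2\cdot 3^r$ survives the subtraction. The only cosmetic difference is that you subtract over the proper divisors of $2r$ (bounding the sum by $r+3^r$), while the paper subtracts $\Phi^\prime_{2k}$ over all $k<r$ and evaluates the sum exactly; both yield the same conclusion.
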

\begin{proof}
Recall that $\Phi_n$ denotes the number of fixed points of $h^{(n)}$ (see Proposition~\ref{number of fixed points}).
Now, note that the value of $\Phi_n$ already includes the trivial fixed point $-3$.  So let $\Phi^\prime_n:=\Phi_n-1$ denote
the number of non-trivial fixed points of $h^{(n)}$.
Furthermore, for any $r$, a fixed point of $h^{(r)}$ is also a fixed point of $h^{(mr)}$ for any $m$. 
Also, any loop of length $r$ consists of $r$ elements and contributes this number to $\Phi_n'$. 
It is enough to show that the quantity
    $$\Phi^\prime_{2r}-\sum_{k=1}^{r-1}\Phi^\prime_{2k}$$
    is strictly positive.
    This follows from the fact that for $r>1$, we have
    $$
        \sum_{k=1}^{r-1}\Phi^\prime_{2k}=\sum_{k=1}^{r-1} \left(2\times 3^k - 4\right)
        =3^r-4r+1,
   $$
    and this is indeed strictly smaller than $\Phi_{2r}'=2\times 3^r-4$.
\end{proof}
\noindent{\it Remark.} The above calculation also shows that we must at least have
$$\left\lceil \frac1{2r}\Bigl( \Phi^\prime_{2r}-\sum_{k=1}^{r-1}\Phi^\prime_{2k}\Bigr) \right\rceil=\left \lceil\frac{3^r-5}{2r}\right \rceil+2$$
 loops of length $2r$.

\begin{prp}
    If $\Lambda_n$ denotes the number of Hesse loops of length $n$, then the sequence $\{\Lambda_{2r}\}$ is strictly increasing.
\end{prp}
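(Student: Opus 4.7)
The plan is to squeeze $\Lambda_{2r}$ between explicit upper and lower bounds built from the $\Phi^\prime_{2r}$ and to show that even the lower bound for $\Lambda_{2(r+1)}$ already beats the upper bound for $\Lambda_{2r}$.

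First I would set up the orbit decomposition. Every non-trivial fixed point of $h^{(2r)}$ belongs to a unique cycle whose minimal period $d$ divides $2r$. By the preceding proposition the only Hesse loop of odd length is the trivial one at $\varphi=-3$, which is precisely the point that has been subtracted in passing from $\Phi_{2r}$ to $\Phi^\prime_{2r}$. Hence only even periods contribute and
\[
\Phi^\prime_{2r} \;=\; \sum_{\substack{d\mid 2r\\ d\text{ even}}} d\,\Lambda_d .
\]

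From this identity the upper bound
\[
\Lambda_{2r}\;\leq\;\frac{\Phi^\prime_{2r}}{2r}\;=\;\frac{3^r-2}{r}
\]
is immediate. For the matching lower bound I would isolate the $d=2r$ term and use the trivial estimate $d\,\Lambda_d\leq\Phi^\prime_d$ on the other summands; noting that every even divisor $d<2r$ of $2r$ is of the form $d=2k$ with $1\leq k\leq r-1$ and plugging in the closed form $\sum_{k=1}^{r-1}\Phi^\prime_{2k}=3^r-4r+1$ from the remark after Proposition on the number of fixed points, I obtain
\[
2r\,\Lambda_{2r}\;\geq\;\Phi^\prime_{2r}-\sum_{k=1}^{r-1}\Phi^\prime_{2k}\;=\;(2\cdot 3^r-4)-(3^r-4r+1)\;=\;3^r+4r-5.
\]

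Applying the lower bound at $r+1$ and comparing to the upper bound at $r$, it suffices to prove
\[
\frac{3^{r+1}+4r-1}{2(r+1)}\;>\;\frac{3^r-2}{r}.
\]
Clearing denominators, this reduces to the elementary inequality
\[
(r-2)\,3^r+4r^2+3r+4\;>\;0,
\]
which holds immediately for $r\geq 2$ (both summands are then nonnegative and the second is strictly positive) and reduces to $8>0$ at $r=1$ by direct substitution. The main obstacle in this plan is that both bounds on $\Lambda_{2r}$ sit near $3^r/r$, so the gap one can afford to leave between them is of order $3^r$; the argument succeeds precisely because the exponential jump from $3^r$ to $3^{r+1}$ dominates the arithmetic factor $(r+1)/r$ that one pays when shifting $r\mapsto r+1$, and the displayed polynomial inequality encodes exactly this competition.
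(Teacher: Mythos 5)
Your proposal is correct and follows essentially the same strategy as the paper: both sandwich $\Lambda_{2r}$ between a lower bound $\frac{1}{2r}\bigl(\Phi^\prime_{2r}-\sum_{k=1}^{r-1}\Phi^\prime_{2k}\bigr)$ and an upper bound of order $\Phi^\prime_{2r}/(2r)$, and then verify that the lower bound at the next index beats the upper bound at the current one via an elementary polynomial inequality. The only differences are cosmetic (your upper bound $\frac{3^r-2}{r}$ is marginally weaker than the paper's $\lfloor\frac{3^r-3}{r}\rfloor$ but still sufficient, and your uniform treatment of $r\ge 2$ avoids the paper's separate hand-check of small cases beyond $r=1$).
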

\begin{proof}
    Since $\Phi^\prime_{2r}$ includes the two elements of the only $2$-loop, the value of $\Lambda_{2r}$ can be at most
    $$\left\lfloor\frac{\Phi^\prime_{2r}-2}{2r}\right\rfloor=\left\lfloor\frac{3^r-3}{r}\right\rfloor$$
    and hence
    $$\left \lceil\frac{3^r-5}{2r}\right \rceil+2\le \Lambda_{2r}\le \left\lfloor\frac{3^r-3}{r}\right\rfloor$$
    for $r>1$.
    So, to prove that the sequence $\{\Lambda_{2r}\}$ is strictly increasing, it is enough to show that
    $$\left \lceil\frac{3^r-5}{2r}\right \rceil>\left\lfloor\frac{3^{r-1}-3}{r-1}\right\rfloor.$$
    This follows from
    $$\frac{3^x-5}{2x}>\frac{3^{x-1}-3}{x-1}$$
    which is true for $x\ge3$ as then we have
    $$3^{x-1}\cdot x+5+x>3^x$$
    which completes the proof for $r\ge3$.

    The cases $r=1,2$ can be checked by hand.
\end{proof}
%


We close this discussion by an explicite formula for the number of loops of length $n=2r$.
\begin{thm}
    The number of loops of length $2r$ is
    $$\Lambda_{2r}=\frac 1{2r} \sum_{d|r} \mu\left(\frac rd\right) \Phi^\prime_{2d}$$
    \textcolor{black}{where  $\Phi_{2d}'=2\times 3^d-4$, and $\mu$ is the M\"obius function.}
\end{thm}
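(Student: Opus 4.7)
The plan is to express $\Phi'_{2d}$ as a sum over divisors and then apply Möbius inversion. The starting observation is that every non-trivial fixed point $x$ of $h^{(2d)}$ lies on a closed orbit whose length divides $2d$. Since, by the preceding proposition, the only odd-length loop is the trivial loop $\{\phi\}$ (already removed by passing to $\Phi'_{2d} = \Phi_{2d}-1$), every such orbit has even length $2e$ for some $e \mid d$.

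Conversely, each Hesse loop of length $2e$ contributes exactly $2e$ distinct points to the fixed-point set of $h^{(2d)}$ whenever $e\mid d$, and different loops contribute disjoint point sets. Combining these observations yields the clean counting identity
\[
\Phi'_{2d} \;=\; \sum_{e\mid d} 2e\,\Lambda_{2e}.
\]
Dividing through by $2$ and setting $F(d):=\Phi'_{2d}/2$ and $G(e):=e\Lambda_{2e}$ puts this in the standard Möbius form $F(d)=\sum_{e\mid d} G(e)$.

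The rest is mechanical: by Möbius inversion,
\[
r\,\Lambda_{2r} \;=\; G(r) \;=\; \sum_{d\mid r}\mu(r/d)\,F(d) \;=\; \tfrac{1}{2}\sum_{d\mid r}\mu(r/d)\,\Phi'_{2d},
\]
and the formula follows after dividing by $r$. Substituting $\Phi'_{2d}=2\cdot 3^d-4$ (from Proposition~\ref{number of fixed points} with the trivial fixed point removed) gives the closed form claimed.

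The only real subtlety, which I would state explicitly before the Möbius step, is the justification that \emph{all} non-trivial fixed points of $h^{(2d)}$ actually lie on genuine periodic orbits (rather than, say, being pre-periodic points that happen to satisfy $h^{(2d)}(x)=x$): this is automatic, since $h^{(2d)}(x)=x$ means the forward orbit $x, h(x), h^{(2)}(x),\ldots$ is finite and cyclic, hence periodic of some period dividing $2d$. Once that is noted, the disjointness of the contributions from different loops and the fact that every loop of length $2e\mid 2d$ is entirely contained in $\mathrm{Fix}(h^{(2d)})$ complete the counting.
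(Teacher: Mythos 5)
Your argument is correct and is essentially the paper's own proof: both decompose the non-trivial fixed points of $h^{(2r)}$ into the disjoint union of the Hesse loops whose (necessarily even) length divides $2r$, obtain $\Phi'_{2r}=\sum_{d\mid r}2d\,\Lambda_{2d}$, and apply M\"obius inversion. Your explicit remarks on periodicity of fixed points and on the exclusion of odd-length orbits are points the paper leaves implicit, but they do not change the route.
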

\begin{proof}
    Let the even divisors of $2r$ be $d_1=2,d_2,\dots, d_k=2r$. Since each loop of length $d_m$ contains exactly $d_m$ elements, the total number of fixed points $\neq -3$ of $h^{(2r)}$ is given by
    $$\Phi^\prime_{2r} = \sum_{\substack{d|2r\\ d \text{ even }}} d\cdot \Lambda_d.$$
    The even divisors of $2r$ are twice the divisors of $r$. Hence we may write
    $$\Phi^\prime_{2r}=\sum_{d|r}2d\cdot \Lambda_{2d}.$$
    Using the Möbius inversion formula, we obtain
    \begin{align*}
        &2r\cdot \Lambda_{2r} = \sum_{d|r}\mu\left (\frac rd\right)\Phi^\prime_{2d}
    \end{align*}
    hence completing the proof.
\end{proof}
The sequence $(\Lambda_{2r})$ starts as follows:
$$
\Lambda_{2}=1,\ \Lambda_{4}=3,\ \Lambda_{6}=8,\ \Lambda_{8}=18,\
\Lambda_{10}=48,\ \Lambda_{12}=116,\ \Lambda_{14}=312,\ \Lambda_{16}=810,\ \ldots
$$
The Hesse loop of length 2 is shown in Figure~\ref{fig:galaxy1}.

\section{Hesse derivatives of other normal forms}

So far, we just considered Hesse derivatives of cubic curves in Hesse form,
{\ie}, of curves~$\G{c}$. The reason was that the Hesse derivative of a 
curve in Hesse form is again a curve in Hesse form, which is in general
not the case for curves, for example, in Weierstrass normal form (WNF). 

Below, we first provide curves in WNF
such that their Hesse derivatives are also in~WNF, and then we provide 
cubics in a $D_3$-symmetric form whose Hesse derivatives 
are in the same form, as in Figure~\ref{fig:galaxy1}.

\subsection{Curves in Weierstrass normal form}

Let $\G c:\ x^3+y^3+z^3+c\,xyz=0$ be a cubic curve in Hesse form. 
Then, as described in~\cite[Sec.\,3]{HessePaper}, by a projective 
transformation, the curve $\G c$, where
$$c=-\frac{2 q^3 + 1}{q^2}$$ can be transformed to the curve
$$\G {a,b}: y^2=x^3+a\,x^2+b\,x$$
where $$b=\frac{(q - 1)^3}{q + q^2 + q^3}\qquad\text{and}\qquad
a=\frac{b^2 - 6 b - 3}{4}\,.$$

For example, for $c_0=3 (\sqrt{3}-1)$ we obtain 
$$q_0=-\frac{\sqrt{3}+1}{2},\qquad
b_0=3 + 2 \sqrt{3},\qquad
a_0=0\,,$$
and for $c_1=-\frac{108+c_0^3}{3c_0^2}=-3(\sqrt{3}+1)$ we obtain
$$q_1=\frac{\sqrt{3}-1}{2},\qquad
b_1=3 - 2 \sqrt{3},\qquad
a_1=0\,.$$
Notice that the curves $\G {a_0,b_0}$ and $\G {a_1,b_1}$
form a Hesse loop of length~$2$.

\subsection{Curves in $D_3$-symmetric form}

In \cite[Sec.\,2]{hhh}, a $D_3$-symmetric form of cubic
curves was introduced and it was shown how to transform a curve
in WNF with a projective transformation into a curve in 
$D_3$-symmetric form. Now, with a similar projective transformation
we can transform any curve $\G c:\ x^3+y^3+z^3+c\,xyz=0$ in Hesse form
directly into the curve 
$$x^3 - 3\,x y^2 + \frac{\sqrt{27} (c - 6)}{2 (c + 3)}\,(x^2 + y^2) - 
\frac{\sqrt{27}}{2}=0\,,$$ where the latter curve is $D_3$-symmetric
(like the curves in Figure\;\ref{fig:galaxy1}). 

\bibliographystyle{plain}

\begin{thebibliography}{1}

\bibitem{hhh}
Andrin Halbeisen, Lorenz Halbeisen, and Norbert Hungerb\"{u}hler.
\newblock Configurations on elliptic curves.
\newblock {\em Innov. Incidence Geom.}, 19(3):111--135, 2022.

\bibitem{HessePaper}
Lorenz Halbeisen and Norbert Hungerb\"{u}hler.
\newblock {An elementary approach to Hessian curves with torsion group
  $\mathbb{Z}/6\mathbb{Z}$}.
\newblock {\em Int. Electron. J. Pure Appl. Math.}, 13(1):1--30, 2019.

\bibitem{hh22}
Lorenz Halbeisen and Norbert Hungerb\"{u}hler.
\newblock Constructing cubic curves with involutions.
\newblock {\em Beitr. Algebra Geom.}, 63(4):921--940, 2022.

\bibitem{hhs}
Lorenz Halbeisen, Norbert Hungerb\"{u}hler, and Vera Stalder.
\newblock Three conics determine a cubic.
\newblock {\em submitted}, 2023.

\bibitem{holme}
Audun Holme.
\newblock {\em A royal road to algebraic geometry}.
\newblock Springer, Heidelberg, 2012.

\end{thebibliography}

\end{document}